\newtheoremstyle{mystyle}
  {}
  {}
  {\itshape}
  {}
  {\bfseries}
  {.}
  { }
  {}
\theoremstyle{mystyle}
\newtheorem{theorem}{Theorem}[section]
\newtheorem{definition}[theorem]{Definition}
\newtheorem{lemma}[theorem]{Lemma}
\newtheorem{corollary}[theorem]{Corollary}
\newtheorem{conjecture}[theorem]{Conjecture}
\newtheorem{example}[theorem]{Example}
\newtheorem{assumption}[theorem]{Assumption}
\newcommand\xqed[1]{%
  \leavevmode\unskip\penalty9999 \hbox{}\nobreak\hfill
  \quad\hbox{#1}}
\newcommand\exampleEnd{\xqed{$\circ$}}
\newbox\dottedarrow@box
\newcommand*\dottedarrow
\newcommand*\dottedarrow@t[1][1.5em]
\newcommand*\dottedarrow@m[1][]
\relax\detokenize{#1}\relax
\title{\LARGE \bf
Ordering and refining path-complete Lyapunov functions through\\ composition lifts
}
\author{Wouter Jongeneel and Rapha\"el M. Jungers
\thanks{RJ is a FNRS honorary Research Associate, supported by the Innoviris Foundation and the FNRS (Chist-Era Druid-net). This project received funding from the European Research Council (ERC)
under the European Union’s Horizon 2020 research and innovation programme, grant agreement No. 864017 (L2C).
The authors are with UCLouvain (ICTEAM). Contact: \{wouter.jongeneel,
raphael.jungers\}@uclouvain.be.} 
}
\begin{document}

\maketitle
\thispagestyle{empty}
\pagestyle{empty}

\begin{abstract}
A fruitful approach to study stability of switched systems is to look for multiple Lyapunov functions. However, in general, we do not yet understand the interplay between the desired stability certificate, the template of the Lyapunov functions and their mutual relationships to accommodate switching. 
In this work we elaborate on path-complete Lyapunov functions: a graphical framework that aims to elucidate this interplay.
In particular, previously, several preorders were introduced to compare multiple Lyapunov functions. These preorders are initially algorithmically intractable due to the algebraic nature of Lyapunov inequalities, yet, lifting techniques were proposed to turn some preorders purely combinatorial and thereby eventually tractable. In this note we show that a conjecture in this area regarding the so-called composition lift, that was believed to be true, is false. 
This refutal, however, points us to a beneficial structural feature of the composition lift that we exploit to iteratively refine path-complete graphs, plus, it points us to a favourable adaptation of the composition lift.
\end{abstract}

\section{Introduction}
Switched systems can be motivated based on physical grounds (\textit{e.g.}, think of switching gears, a walking robot or packet loss), or topological grounds (\textit{e.g.}, think of local or global obstructions to the existence of continuous feedback). For references we point the reader to~\cite{ref:liberzon2003switching,ref:tabuada2009verification,ref:goebel2012hybrid,ref:sanfelice2020hybrid,ref:jongeneel2023topological}.   
Concretely, we are interested in studying the stability of the origin $0\in \mathbb{R}^n$ under a discrete-time switched system of the form
\begin{equation}
\label{equ:switched}
    x(k+1) = f_{\sigma(k)}(x(k)),\quad k\in \mathbb{N}_{\geq 0},
\end{equation}
where $\sigma:\mathbb{N}_{\geq 0}\to \Sigma$ is the switching signal, for $\Sigma $ a finite alphabet that parametrizes a set of maps $F=\{f_{i}\in C^0(\mathbb{R}^n;\mathbb{R}^n)\,|\,f_i(0)=0,\,i\in \Sigma\}$, sometimes shortened to $\{f_i\,|\,i\in \Sigma\}$. We will assume that $\sigma$ is random and we study in that sense worst-case behaviour of the switched system~\eqref{equ:switched}. To be precise, we are interested in understanding if $0$ is \textit{uniformly globally asymptotically stable} (UGAS) under~\eqref{equ:switched}.

Stability of switched systems is theoretically interesting and non-trivial due to the fact that switching between stable maps $f_1,\dots,f_{|\Sigma|}$ can result in an unstable system and even more counter-intuitive, the combination of some unstable maps might be stable, although not under random switching.   

Although switched systems do admit a converse Lyapunov theory, \textit{e.g.}, see~\cite[Thm. 2.2]{ref:jungers2009joint}, looking for \textit{multiple} Lyapunov functions and an appropriate switching scheme between them is ought to be more fruitful than looking for a common Lyapunov function~\cite{ref:branicky1998multiple}. Nevertheless, the search for multiple Lyapunov functions is somewhat of an art and with this work we aim to contribute to making this a principled study. In particular, we look at \textit{path-complete Lyapunov functions} (PCLFs) \cite{ref:ahmadi2014joint}, a framework that aims to streamline the study of multiple Lyapunov functions by looking at the problem through the lens of graph- and order theory. Specifically, given a set of (multiple) Lyapunov functions $\{V_s\,|\,s\in S\}$, then each Lyapunov inequality corresponding to the switched system~\eqref{equ:switched} will be of the form $V_a(x)\geq \gamma V_b(f_i(x))$, for some $\gamma>0$, which can be understood as a labeled edge $(a,b,i)\in E$, being part of a directed graph (digraph) $\mathcal{G}=(S,E)$. We elaborate on this below. Throughout, we will assume that our graphs are always of the so-called ``\textit{expanded form}'' (\textit{e.g.}, all edges have labels of length one)~\cite[Def. 2.1]{ref:ahmadi2014joint}.    

\begin{definition}[Path-completeness]
    Consider a labeled digraph $\mathcal{G}=(S,E)$ on a finite alphabet $\Sigma$. Then, $\mathcal{G}$ is said to be path-complete, denoted $\mathcal{G}\in \mathrm{pc}(\Sigma)$, when for any finite word $i_1i_2\cdots i_k\in \Sigma^{k}$, there exists a sequence of nodes $s_1,\dots,s_{k+1} \in S$ such that $(s_j,s_{j+1},i_j)\in E$ $\forall j\in [k]$.  
\end{definition}
For now, Lyapunov functions used to assert stability of $0\in \mathbb{R}^n$ under~\eqref{equ:switched}, live in $C^0(\mathbb{R}^n;\mathbb{R}_{\geq 0})$, are positive definite (\textit{i.e.}, $V(0)=0$ and $V(x)>0$ $\forall\,x\in \mathbb{R}^n\setminus \{0\}$) and such that $\alpha_1(\|x\|_2)\leq V(x)\leq \alpha_2(\|x\|_2)$ $\forall\, x\in \mathbb{R}^n$ for $\mathcal{K}_{\infty}$ functions $\alpha_1$ and $\alpha_2$. We denote this set of \textit{candidate Lyapunov functions}, with respect to $0\in \mathbb{R}^n$, by $\mathrm{Lyap}_0(\mathbb{R}^n)$.

\begin{definition}[Path-complete Lyapunov functions]
\label{def:PCLF}
    Consider a switched system~\eqref{equ:switched} comprised by $F=\{f_i\,|\,i\in \Sigma\}$ and let $\mathcal{V}$ be a template of functions. Then, a path-complete Lyapunov function (PCLF) for $F$ is the triple $(\mathcal{G},V_S,\gamma)$ where $\mathcal{G}\in \mathrm{pc}(\Sigma)$, $V_S=\{V_s\in \mathrm{Lyap}_0(\mathbb{R}^n)\,|\,s\in S\}\subseteq \mathcal{V}$ and $\gamma>0$ such that for all edges $(a,b,i)$ of $\mathcal{G}$ we have that
    \begin{equation}
    \label{equ:pclf:ineq}
        V_a(x)\geq \gamma V_b(f_i(x)) \quad \forall x\in \mathbb{R}^n. 
    \end{equation}
    If this holds true, we say that the pair $(V_S,\gamma)$ is admissible for $\mathcal{G}$, $\mathcal{V}$ and $F$, denoted by $(V_S,\gamma)\in \mathrm{pclf}(\mathcal{G},\mathcal{V},F)$.
\end{definition}

We emphasize that the effective template under consideration is of the form $\mathcal{V}\cap \cup_{n\in \mathbb{N}_{>0}}\mathrm{Lyap}_0(\mathbb{R}^n)$, this, to clearly differentiate between structure due to the template and due to stability. Also, $\gamma$ is introduced to allow for a general stability analysis (\textit{i.e.}, readily generalizable beyond linear systems). 

Note that a graph $\mathcal{G}$ can be utilized as a PCLF for systems of different dimension (\textit{e.g.}, a common Lyapunov function is always a graph with a single node). As such, we will consider function spaces of the form $\mathcal{F}=\cup_{n\in \mathbb{N}_{>0}}\{\mathcal{F}_n \subseteq C^0(\mathbb{R}^n;\mathbb{R}^n)\}$ and similarly $\mathcal{V}=\cup_{n\in \mathbb{N}_{>0}} \mathcal{V}_n$. We do add that $F\subseteq \mathcal{F}$ is understood as a set contained in some $\mathcal{F}_n$.  

Now, path-completeness naturally connects to stability as follows (\textit{e.g.}, this is a natural extension of \cite[Thm. 2.4]{ref:ahmadi2014joint}).
\begin{theorem}[Uniform global asymptotic stability]
\label{thm:UGAS}
Let $(V_S,\gamma)\in \mathrm{pclf}(\mathcal{G},\mathcal{V},F)$ for a $\mathcal{G}\in \mathrm{pc}(\Sigma)$, template $\mathcal{V}$ and set $F$ parametrizing~\eqref{equ:switched}, then $0$ is UGAS under~\eqref{equ:switched} if $\gamma>1$. 
\end{theorem}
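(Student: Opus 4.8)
The plan is the standard ``telescope along a path'' argument, adapted to the path-complete graph and then repackaged into a class-$\mathcal{KL}$ estimate. The hypotheses I will lean on are: $\mathcal{G}\in\mathrm{pc}(\Sigma)$, the edgewise inequality \eqref{equ:pclf:ineq} with the fixed $\gamma>1$, and the defining properties of $\mathrm{Lyap}_0(\mathbb{R}^n)$ (positive definiteness and the $\mathcal{K}_{\infty}$ sandwich bounds).

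First I would exploit finiteness of the node set $S$. Each $V_s\in V_S$ admits $\mathcal{K}_{\infty}$ bounds $\alpha_{1,s}(\|x\|_2)\le V_s(x)\le\alpha_{2,s}(\|x\|_2)$; setting $\underline\alpha:=\min_{s\in S}\alpha_{1,s}$ and $\overline\alpha:=\max_{s\in S}\alpha_{2,s}$, which are still $\mathcal{K}_{\infty}$ because $S$ is finite, yields uniform bounds $\underline\alpha(\|x\|_2)\le V_s(x)\le\overline\alpha(\|x\|_2)$ valid for every node $s\in S$ and every $x$.

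Next, fix an arbitrary switching signal $\sigma$, an initial condition $x_0$, a horizon $k\in\mathbb{N}_{>0}$, and let $x(\cdot)$ be the corresponding trajectory of \eqref{equ:switched}. By path-completeness applied to the prefix word $\sigma(0)\sigma(1)\cdots\sigma(k-1)$, there are nodes $s_1,\dots,s_{k+1}\in S$ with $(s_j,s_{j+1},\sigma(j-1))\in E$ for all $j\in[k]$. Applying \eqref{equ:pclf:ineq} along each such edge gives $V_{s_j}(x(j-1))\ge\gamma\,V_{s_{j+1}}(x(j))$, and chaining these from $j=1$ to $j=k$ produces $V_{s_1}(x_0)\ge\gamma^{k}V_{s_{k+1}}(x(k))$. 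Combined with the uniform bounds, $\underline\alpha(\|x(k)\|_2)\le V_{s_{k+1}}(x(k))\le\gamma^{-k}V_{s_1}(x_0)\le\gamma^{-k}\overline\alpha(\|x_0\|_2)$, hence $\|x(k)\|_2\le\underline\alpha^{-1}\!\big(\gamma^{-k}\overline\alpha(\|x_0\|_2)\big)=:\beta(\|x_0\|_2,k)$.

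Finally, because $\gamma>1$ the map $k\mapsto\gamma^{-k}$ is strictly decreasing to $0$, so $\beta$ is of class $\mathcal{KL}$: it is $\mathcal{K}_{\infty}$ in its first argument for each fixed $k$ (composition of $\mathcal{K}_{\infty}$ maps) and strictly decreasing to $0$ in $k$ for each fixed positive first argument, and a continuous extension to $t\in\mathbb{R}_{\ge0}$ is obtained by replacing $\gamma^{-k}$ with $e^{-(\ln\gamma)t}$. The estimate $\|x(k)\|_2\le\beta(\|x_0\|_2,k)$ holds for all $k$, all $x_0$, and with the same $\beta$ for every $\sigma$, which is precisely uniform global asymptotic stability of $0$ under \eqref{equ:switched}. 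I do not anticipate a genuine obstacle here; the only points that require care are that $\underline\alpha,\overline\alpha$ remain in $\mathcal{K}_{\infty}$ (this is where finiteness of $S$ is used) and that path-completeness is invoked once per horizon on the corresponding prefix word, which is legitimate since the time-$k$ estimate depends only on that prefix. The strict inequality $\gamma>1$ is used solely to force the decay; $\gamma\ge1$ already yields uniform global stability.
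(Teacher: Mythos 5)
Your proof is correct and is exactly the standard argument behind this statement: the paper gives no proof of Theorem~\ref{thm:UGAS}, presenting it as a natural extension of the linear result it cites, and that extension is precisely your telescoping of the edge inequalities along the path supplied by path-completeness for each prefix word, followed by the uniform $\mathcal{K}_\infty$ sandwich (using finiteness of $S$) and the $\gamma^{-k}$ decay to assemble a switching-independent $\mathcal{KL}$ bound. No gaps; your side remarks (uniformity over $\sigma$, dependence only on the prefix, and $\gamma\ge 1$ giving uniform stability) are also accurate.
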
  

Below we comment on a more informative version of Theorem~\ref{thm:UGAS} for linear systems, but first we highlight where we aim to contribute in the study of multiple Lyapunov functions. We are inspired by the open problem posed in~\cite[Sec. 7]{ref:ahmadi2014joint}: ``\textit{How can we compare the performance of different path-complete graphs in a systematic way?}'' More concretely, consider the following example. 
\begin{example}[Ambiguity in selecting inequalities]
\label{ex:ambiguity}
\begin{upshape}
Suppose we have a switched system with $\Sigma=\{1,2\}$. Besides trying to find a common Lyapunov function, one could consider 
\begin{equation}
\label{equ:Lyap:ineq:ex}
\begin{aligned}
V_{b}(x)\geq V_{b} (f_1(x))\\
V_{b}(x)\geq V_{c} (f_2(x))\\
V_{c}(x)\geq V_{c} (f_2(x))\\
V_{c}(x)\geq V_{b} (f_1(x))
\end{aligned}
\quad \text{or} \quad
\begin{aligned}
V_{b'}(x)\geq V_{b'} (f_1(x))\\
V_{b'}(x)\geq V_{c'} (f_1(x))\\
V_{c'}(x)\geq V_{c'} (f_2(x))\\
V_{c'}(x)\geq V_{b'} (f_2(x))
\end{aligned},
\end{equation}
corresponding to the path-complete graphs $\mathcal{G}_1$ and $\mathcal{G}_2$, in Figure~\ref{fig:PCLF1}. Similarly, one could consider the inequalities corresponding to Figure~\ref{fig:conjecture:counter}. 
However, towards a principled study of multiple Lyapunov functions, we lack appropriate machinery to determine which set of inequalities to prefer (\textit{e.g.}, what should one enter into a solver), if any? \exampleEnd
\end{upshape}
\end{example}  

To study the ambiguity as illustrated by Example~\ref{ex:ambiguity}, we use the following preorder, which is a slight modification of previous work to get a better grip on stability, \textit{e.g.}, see~\cite{ref:philippe2019completeV,ref:debauche2024path}, as such we use ``$\preceq$'' instead of ``$\leq$''.

\begin{definition}[Preorder relation]
\label{def:preorder}
Consider $\mathcal{G}_1, \mathcal{G}_2\in \mathrm{pc}(\Sigma)$, $\Gamma\subseteq \mathbb{R}_{>0}$, a template of Lyapunov functions $\mathcal{V}$ and a set of maps $\mathcal{F}$. If $\forall F\in \mathcal{F}^{|\Sigma|}$ $\exists (V_{S_1},\gamma\in \Gamma)\in \mathrm{pclf}(\mathcal{G}_1,\mathcal{V},F) \implies   \exists (V_{S_2},\gamma\in \Gamma)\in  \mathrm{pclf}(\mathcal{G}_2,\mathcal{V},F)$  
then
\begin{equation}
\label{equ:order}
\mathcal{G}_1\preceq_{(\Gamma,\mathcal{V},\mathcal{F})}\mathcal{G}_2.
\end{equation}
\end{definition}


One can interpret~\eqref{equ:order} as $\mathcal{G}_2$ being ``\textit{more expressive}'' than $\mathcal{G}_1$ and in that sense $\mathcal{G}_2$ is understood as ``\textit{better}''. Clearly, the minimal element would be a common Lyapunov function $V_a\in \mathcal{V}$ together with the graph $\mathcal{G}_0=(S_0,E_0)$ comprised out of the single node $a=:S_0$ and the edges $E_0:=\{(a,a,i)\,|\,i\in \Sigma \}$, \textit{e.g.}, see Figure~\ref{fig:connected}. However, as we know, this hinges on our choice of template (\textit{e.g.}, there are stable switched systems that fail to admit a common quadratic Lyapunov function). If the preorder\footnote{Note that the relation~\eqref{equ:order} clearly satisfies the \textit{reflexive} and \textit{transitive} properties of an order relation. Yet, \textit{antisymmetry} does not hold true in general \textit{cf}. Figure~\ref{fig:PCLF1}, thus we can only speak of a \textit{preorder}.} relation~\eqref{equ:order} must be true for any $\mathcal{F}$ we write $\mathcal{G}_1\preceq_{\Gamma,\mathcal{V}} \mathcal{G}_2$, on the other hand, if~\eqref{equ:order} must hold for any $\mathcal{V}$, $\mathcal{F}$ and $\Gamma$, we write $\mathcal{G}_1\preceq \mathcal{G}_2$.   

Direct use of Definition~\ref{def:preorder} is as follows. Suppose that $F_{\mathcal{A}}$ is the set of linear maps parametrized by the matrices $\mathcal{A}:=\{A_1,\dots,A_{|\Sigma|}\}$. Stability of a switched linear system comprised by $\mathcal{A}$, under arbitrary switching, is captured by the \textit{joint spectral radius} (JSR)~\cite{ref:rota1960} of $\mathcal{A}$, defined through
\begin{align*}
\rho(\mathcal{A}):= \lim_{k\to +\infty}\sup_{ \{j_1,\dots,j_k\}\in \Sigma^k  } \|A_{j_k}A_{j_{k-1}}\cdots A_{j_1}\|^{1/k}, 
\end{align*}
not being larger than $1$. Unfortunately, computing $\rho(\mathcal{A})$ is NP-hard and checking whether $\rho(\mathcal{A})\leq 1$ is undecidable and not-algebraic, \textit{e.g.}, see~\cite{ref:kozyakin1990algebraic,ref:tsitsiklis1997lyapunov,ref:blondel2000boundedness, ref:jungers2009joint}. Hence, one is drawn to the development of approximation schemes, \textit{e.g.}, see~\cite{ref:parrilo2008approximation}.  

Next, we provide the linear version of Theorem~\ref{thm:UGAS}. 

\begin{theorem}[The joint spectral radius~{\cite[Prop. 1.2]{ref:jungers2009joint}}]
\label{thm:pclf:JSR}
For $\mathcal{G}\in \mathrm{pc}(\Sigma)$, with $F_{\mathcal{A}}$ parametrized by $\mathcal{A}:=\{A_1,\dots,A_{|\Sigma|}\}$, let $(V_S,1)\in \mathrm{pclf}(\mathcal{G},\mathcal{V},F_{r^{-1} \mathcal{A}})$ for some template $\mathcal{V}$ and some $r>0$, then $\rho(\mathcal{A})\leq r$. 
\end{theorem}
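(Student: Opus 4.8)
The plan is to leverage path-completeness to chain the Lyapunov inequalities along an arbitrary finite word, and then turn the resulting pointwise estimate into a uniform bound on the growth of the long products $A_{i_k}\cdots A_{i_1}$; the definition of the joint spectral radius then delivers $\rho(\mathcal{A})\le r$. Note that Theorem~\ref{thm:UGAS} as stated requires $\gamma>1$ and hence does not apply verbatim here, since we have $\gamma=1$; so the argument instead re-derives the relevant non-strict growth bound directly from the chaining idea.

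First I would fix notation: write $f_i(x)=r^{-1}A_i x$ for the maps parametrizing $F_{r^{-1}\mathcal{A}}$, and fix $(V_S,1)\in\mathrm{pclf}(\mathcal{G},\mathcal{V},F_{r^{-1}\mathcal{A}})$, so that the edge inequalities~\eqref{equ:pclf:ineq} read $V_a(x)\ge V_b(r^{-1}A_i x)$ for every $(a,b,i)\in E$ and every $x\in\mathbb{R}^n$. Each $V_s\in\mathrm{Lyap}_0(\mathbb{R}^n)$ carries $\mathcal{K}_\infty$ bounds $\alpha_{1,s}(\|x\|_2)\le V_s(x)\le\alpha_{2,s}(\|x\|_2)$; since $S$ is finite I would set $\alpha_1:=\min_{s\in S}\alpha_{1,s}$ and $\alpha_2:=\max_{s\in S}\alpha_{2,s}$, which are again $\mathcal{K}_\infty$ and satisfy $\alpha_1(\|x\|_2)\le V_s(x)\le\alpha_2(\|x\|_2)$ simultaneously for all $s\in S$.

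Next, take an arbitrary word $w=i_1\cdots i_k\in\Sigma^k$. By path-completeness there exist nodes $s_1,\dots,s_{k+1}\in S$ with $(s_j,s_{j+1},i_j)\in E$ for all $j\in[k]$. Composing the corresponding inequalities, the factors $r^{-1}$ accumulate to $r^{-k}$ and, writing $P_w:=A_{i_k}\cdots A_{i_1}$, one obtains
\[
V_{s_1}(x)\ \ge\ V_{s_{k+1}}\!\left(r^{-k}P_w\,x\right)\qquad\forall\,x\in\mathbb{R}^n.
\]
Applying the uniform sandwich bounds to both sides gives $\alpha_1\!\left(r^{-k}\|P_w x\|_2\right)\le\alpha_2(\|x\|_2)$, hence $\|P_w x\|_2\le r^{k}\,\alpha_1^{-1}\!\left(\alpha_2(\|x\|_2)\right)$ for all $x$. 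Taking the supremum over $\|x\|_2=1$ yields $\|P_w\|_2\le C\,r^{k}$ with $C:=\alpha_1^{-1}(\alpha_2(1))$, a constant independent of both $k$ and $w$.

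Finally I would plug this into the definition of $\rho$ (which is norm-independent, so using $\|\cdot\|_2$ is harmless): for every $k$ we get $\sup_{w\in\Sigma^k}\|P_w\|_2^{1/k}\le C^{1/k}\,r$, and since $C^{1/k}\to 1$ as $k\to\infty$ the defining limit satisfies $\rho(\mathcal{A})\le r$. The only delicate points are the bookkeeping in the telescoping step — checking that the argument of $V_{s_{k+1}}$ is exactly $r^{-k}P_w x$ — and the preliminary observation that one must pass to a single pair $(\alpha_1,\alpha_2)$ of $\mathcal{K}_\infty$ bounds valid across all nodes before chaining; everything else is routine.
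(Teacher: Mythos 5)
Your proof is correct: the chaining of the edge inequalities along an arbitrary word guaranteed by path-completeness, the uniform $\mathcal{K}_\infty$ sandwich over the finite node set, and the limit $C^{1/k}\to 1$ all go through (including the order of the factors in $P_w$ and the fact that the pointwise min/max of finitely many $\mathcal{K}_\infty$ functions is again $\mathcal{K}_\infty$). The paper itself gives no proof and simply cites \cite[Prop.~1.2]{ref:jungers2009joint}; your argument is essentially the standard proof underlying that citation, so there is nothing to add.
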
    

With Theorem~\ref{thm:pclf:JSR} in mind, we define the graph- and template-based approximation of $\rho(\mathcal{A})$ as
\begin{align}
\label{equ:gamma:opt}
\rho_{\mathcal{G},\mathcal{V}}(\mathcal{A}):= \min_{r>0}\{r:\exists (V,1)\in \mathrm{pclf}(\mathcal{G},\mathcal{V},F_{r^{-1}\mathcal{A}})\}.
\end{align}
Then, $\rho(\mathcal{A})\leq \rho_{\mathcal{G},\mathcal{V}}(\mathcal{A})\leq r'$ for any $r'>0$ feasible in~\eqref{equ:gamma:opt} and in particular, $\mathcal{G}_1\preceq_{(\{1\},\mathcal{V},\mathcal{F})}\mathcal{G}_2 \implies \rho_{\mathcal{G}_2,\mathcal{V}}(\mathcal{A})\leq \rho_{\mathcal{G}_1,\mathcal{V}}(\mathcal{A})$ \cite[Prop. 6.22]{ref:debauche2024path}, \textit{i.e.}, the preorder identifies which graph is better, when approximating the JSR. The preorder ``$\preceq_{(\{1\},\cdot)}$'' will be denoted by ``$\leq_{(\cdot)}$'', in line with \cite{ref:debauche2024path}. 

At last, we recall that a graph $\mathcal{G}_1=(S_1,E_1)$ is said to \textit{simulate} a graph $\mathcal{G}_2=(S_2,E_2)$ when there is a map $R:S_2\to S_1$ such that $(a,b,i)\in E_2 \implies (R(a),R(b),i)\in E_1$ (\textit{e.g.}, think of the behaviour of $\mathcal{G}_2$ being ``\textit{contained}'' in $\mathcal{G}_1$). It is known that $\mathcal{G}_1\leq \mathcal{G}_2\iff$ $\mathcal{G}_1$ simulates $\mathcal{G}_2$~\cite{ref:philippe2019completeV}. As $\mathcal{G}_1\leq \mathcal{G}_2$ is a strong demand, researchers looked at different preorders and ``\textit{lifted}'' simulation relations, as discussed next.

\section{The sum lift}
\label{sec:prelim}
Consider the Lyapunov inequalities $V_a(x)\geq V_b(f_i(x))$ and $V_c(x)\geq V_d(f_i(x))$, then define $V_{\{a,c\}}:=V_a+V_c$ and $V_{\{b,d\}}:=V_b+V_d$, we readily obtain the new inequality $V_{\{a,c\}}(x)\geq V_{\{b,d\}}(f_i(x))$, so, it appears natural to consider $\mathcal{V}$ to be closed under addition. This is the \textit{sum lift} \cite[Sec. 7.3]{ref:debauche2024path}. Let $\mathrm{mset}^T(S)$ be the length-$T$ multiset of elements of $S$ (\textit{e.g.}, $\mathrm{mset}^2(\{a,b\})=\{ \{a,a\},\{a,b\},\{b,b\}\}$). 

\begin{definition}[The $T$-sum lift]
\label{def:T:sum:lift}
Consider a labeled digraph $\mathcal{G}=(S,E)$ on $\Sigma$, then for any $T\in \mathbb{N}_{>0}$ we define the {$T$-sum lift} of $\mathcal{G}$, denoted $\mathcal{G}^{\oplus T}=(S^{\oplus T},E^{\oplus T})$, through  $S^{\oplus T}:=\mathrm{mset}^T(S)$; and  $E^{\oplus T}:=\{((a_1,\dots,a_T),(b_1,\dots,b_T),i)\,|\,(a_k,b_k,i)\in E,\,\forall k\in [T]\}$. 
\end{definition}
Then, the \textit{sum lift} of $\mathcal{G}$ becomes $\mathcal{G}^{\oplus}=\cup_{T\in \mathbb{N}_{>0}}\mathcal{G}^{\oplus T}$.The sum lift is particularly interesting due to the following result. 

\begin{theorem}[$T$-sum lift simulation relation {\cite[Thm. 1]{ref:debauche2023characterizationV}}]
\label{theorem:T:sum:lift}
    Let $\mathcal{G}_1$ and $\mathcal{G}_2$ be path-complete on $\Sigma$, then the following statements are equivalent:
    \begin{enumerate}[(i)]
        \item $\mathcal{G}_1^{\oplus}$ simulates $\mathcal{G}_2$; 
        \item $\mathcal{G}_1\leq_{(\mathcal{V})}\mathcal{G}_2$ for any template $\mathcal{V}$ closed under addition. 
    \end{enumerate}
\end{theorem}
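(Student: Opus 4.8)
The plan is to establish the two implications separately: (i)$\Rightarrow$(ii) is a direct verification, whereas (ii)$\Rightarrow$(i) goes by contraposition through an explicit ``rigid'' witness instance.

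\textit{(i)$\Rightarrow$(ii).} Suppose $\mathcal{G}_1^{\oplus}$ simulates $\mathcal{G}_2$ via a map $R:S_2\to S_1^{\oplus}$, so that $(a,b,i)\in E_2$ implies $(R(a),R(b),i)\in E_1^{\oplus}$; since a $T$-sum lift only connects multisets of equal cardinality, this already forces $|R(a)|=|R(b)|$ along every edge. Fix an arbitrary template $\mathcal{V}$ closed under addition, an arbitrary set of maps $F\in\mathcal{F}^{|\Sigma|}$, and assume $(V_{S_1},1)\in\mathrm{pclf}(\mathcal{G}_1,\mathcal{V},F)$. For $s\in S_2$ set $V_s:=\sum_{a\in R(s)}V_a$, the sum being over the multiset $R(s)$ with multiplicity. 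I would first check that $V_s\in\mathrm{Lyap}_0(\mathbb{R}^n)\cap\mathcal{V}$: a finite sum of positive-definite functions admitting $\mathcal{K}_\infty$ sandwich bounds is again of this type, and closure of $\mathcal{V}$ under addition keeps $V_s$ in the template. Then, for $(a,b,i)\in E_2$, writing $R(a)=(a_1,\dots,a_T)$ and $R(b)=(b_1,\dots,b_T)$, the definition of $E_1^{\oplus}$ gives $(a_k,b_k,i)\in E_1$, hence $V_{a_k}(x)\geq V_{b_k}(f_i(x))$ for every $k\in[T]$; summing over $k$ yields $V_a(x)\geq V_b(f_i(x))$. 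Thus $(V_{S_2},1)\in\mathrm{pclf}(\mathcal{G}_2,\mathcal{V},F)$, i.e.\ $\mathcal{G}_1\leq_{(\mathcal{V})}\mathcal{G}_2$; since $\mathcal{V}$ was arbitrary this is (ii).

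\textit{(ii)$\Rightarrow$(i).} I would prove the contrapositive: assuming $\mathcal{G}_1^{\oplus}$ does not simulate $\mathcal{G}_2$, I construct a template $\mathcal{V}^{\star}$ closed under addition and a tuple $F^{\star}$ of maps such that a natural $V_{S_1}$ gives $(V_{S_1},1)\in\mathrm{pclf}(\mathcal{G}_1,\mathcal{V}^{\star},F^{\star})$, while no admissible $(V_{S_2},1)$ exists for $\mathcal{G}_2$ --- contradicting the instance of (ii) for $\mathcal{V}=\mathcal{V}^{\star}$. The construction turns the edge relation of $\mathcal{G}_1$ into a ``tautological'' linear switched system: use one coordinate block per node of $S_1$ (refined into one sub-block per outgoing edge, to cope with several $i$-labelled edges leaving the same node), let $f_i^{\star}$ be the blockwise-linear map that carries the block of $a$ isometrically into the block of $b$ exactly when $(a,b,i)\in E_1$ and annihilates the rest, pick a ``seed'' function $W_a\in\mathrm{Lyap}_0(\mathbb{R}^n)$ privileging the block of $a$, and let $\mathcal{V}^{\star}$ be the set of all finite nonnegative-integer combinations of the $W_a$'s (closed under addition by construction, and inside $\mathrm{Lyap}_0$ after a harmless common regularisation). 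Then $V_a:=W_a$ is admissible for $\mathcal{G}_1$ by the way $f_i^{\star}$ is built. Conversely, an admissible $(V_{S_2},1)$ for $\mathcal{G}_2$ must assign to each $s\in S_2$ a multiset $R(s)$ of seeds (by the shape of $\mathcal{V}^{\star}$), and the key step is to show that the inequality $V_a(x)\geq V_b(f_i^{\star}(x))$ over an edge $(a,b,i)\in E_2$ forces a coordinate-wise matching of the seeds in $R(a)$ onto those in $R(b)$ along $i$-labelled edges of $\mathcal{G}_1$, i.e.\ $(R(a),R(b),i)\in E_1^{\oplus}$; hence $R$ simulates $\mathcal{G}_2$ by $\mathcal{G}_1^{\oplus}$, the desired contradiction.

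\textit{Main obstacle.} Everything delicate is in the witness of (ii)$\Rightarrow$(i): the seeds and the dynamics must be ``rigid'' enough that $V_a(x)\geq V_b(f_i^{\star}(x))$ leaves no analytic slack --- a sum of seeds supported on one collection of blocks must not be able to dominate a sum supported on another collection except through the intended edge-matching, and the non-functional nature of the relation $\{(a,b):(a,b,i)\in E_1\}$ must be absorbed by the extra sub-blocks (and checked not to create spurious inequalities for $\mathcal{G}_1$ itself). Establishing the precise equivalence ``$(V_{S_2},1)$ admissible for $\mathcal{G}_2$ $\iff$ $R$ is a sum-lift simulation'', while simultaneously keeping $\mathcal{V}^{\star}$ inside $\mathrm{Lyap}_0$ and closed under addition, is where the real work lies; the remaining items (equal-cardinality multisets, $\mathcal{K}_\infty$ bounds, the $\gamma=1$ normalisation) are routine bookkeeping.
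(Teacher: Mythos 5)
First, note that the paper does not prove this statement at all: Theorem~\ref{theorem:T:sum:lift} is imported verbatim from \cite[Thm.~1]{ref:debauche2023characterizationV}, so there is no in-paper proof to compare against; your proposal has to stand on its own. Your direction (i)$\Rightarrow$(ii) does stand: defining $V_s:=\sum_{a\in R(s)}V_a$ over the multiset $R(s)$ and summing the edge inequalities of $\mathcal{G}_1$ is the standard and correct argument, and your observation that an edge of $\mathcal{G}_1^{\oplus}$ only ever connects multisets of equal cardinality is the right bookkeeping.

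The gap is in (ii)$\Rightarrow$(i), and it is not cosmetic: the ``rigidity'' step you defer is precisely the mathematical content of the theorem. Concretely, you must exhibit a template $\mathcal{V}^{\star}$ closed under addition and a system $F^{\star}$ such that \emph{every} admissible assignment $(V_{S_2},1)$ for $\mathcal{G}_2$ (not just ones built from your seeds --- membership in $\mathcal{V}^{\star}$ gives you that, but only after $\mathcal{V}^{\star}$ is fully specified, including the ``harmless regularisation'' which must not create analytic slack) decomposes each $V_s$ into a multiset of seeds, and such that $V_a(x)\geq V_b(f_i^{\star}(x))$ forces (a) equal cardinality of the two multisets --- nothing in ``nonnegative-integer combinations of seeds'' prevents a sum of three seeds from dominating a sum of two, yet $E_1^{\oplus T}$ only compares $T$-multisets --- and (b) a perfect matching of the seeds along $i$-labelled edges of $E_1$, which is a Hall/Birkhoff-type argument, not a pointwise one, since a sum can a priori dominate another sum without any termwise domination. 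Your sub-block refinement for non-functional edge relations also has to be checked in both directions: it must not make the seed inequalities for $\mathcal{G}_1$ fail (the seeds are coercive on the whole space, while $f_i^{\star}$ annihilates blocks), and it must not introduce unintended dominations that let $\mathcal{G}_2$ be certified without a lift simulation. Since you explicitly leave all of (a), (b) and these checks open, the hard implication is a strategy outline --- plausibly the same flavour as the construction in \cite{ref:debauche2023characterizationV}, which likewise builds a counterexample system out of the edge structure of $\mathcal{G}_1$ --- but not a proof.
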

One might hope that Theorem~\ref{theorem:T:sum:lift} holds when restricted to \textit{quadratic forms} and linear maps. We will show that there are graphs that can be ordered with respect to this template, yet, the ordering does not imply the existence of a $T$-sum lift simulation relation. This shows that Theorem~\ref{theorem:T:sum:lift} fails subject to these restrictions. This motivates new lifts.

Here, we can build upon \cite[Ex. 3.9]{ref:philippe2019completeV}. It can be shown that there is no simulation relation between $\mathcal{G}_1=(S_1,E_1)$ and $\mathcal{G}_2=(S_2,E_2)$ as drawn in Figure~\ref{fig:PCLF1}, that is, in both directions. The key observation is as follows. Suppose that $\mathcal{G}_1$ simulates $\mathcal{G}_2$, then there should be a map $R:S_2\to S_1$ such that for all $(a,b,i)\in E_2$ we have that $(R(a),R(b),i)\in E_1$, however, this fails as we must have $b'\mapsto R(b')=b$ and $c'\mapsto R(c')=c$, yet, $(b',c',1)\mapsto (b,c,1)\notin E_1$. 

Now, suppose that the underlying switched system is comprised out of \textit{invertible} linear maps, for the moment, we denote all those maps by $\mathcal{F}$. Then, it follows that $\mathcal{G}_1$ and $\mathcal{G}_2$ can be ordered when the template $\mathcal{V}$ is closed under \textit{composition} with elements from $\mathcal{F}$ (\textit{e.g.}, like quadratic forms). We point out that $\mathcal{G}_1$ and $\mathcal{G}_2$ are in this sense \textit{equivalent} since $\mathcal{G}_1\preceq_{(\mathcal{V},\mathcal{F})}\mathcal{G}_2$ and $\mathcal{G}_2\preceq_{(\mathcal{V},\mathcal{F})}\mathcal{G}_1$ (\textit{n.b.}, the ordering holds for any $\Gamma\subseteq \mathbb{R}^n_{>0}$). To see this, suppose that $(\{V_b,V_c\},\gamma)$ is valid for $\mathcal{G}_1$, then we can pick $V_{b'}:=V_b\circ A_1$, $V_{c'}:=V_c\circ A_2$ and $\gamma$ for $\mathcal{G}_2$. We will see that this relates to the \textit{forward} composition lift. On the other hand, suppose that $(\{V_{b'},V_{c'}\},\gamma)$ is valid for $\mathcal{G}_2$, then we can pick $V_{b}:=V_{b'}\circ A_1^{-1}$, $V_{c}:=V_{c'}\circ A_2^{-1}$ and $\gamma$ for $\mathcal{G}_1$. We will see that this relates to the \textit{backward} composition lift.

\begin{figure}
    \centering
    \includegraphics[scale=0.6]{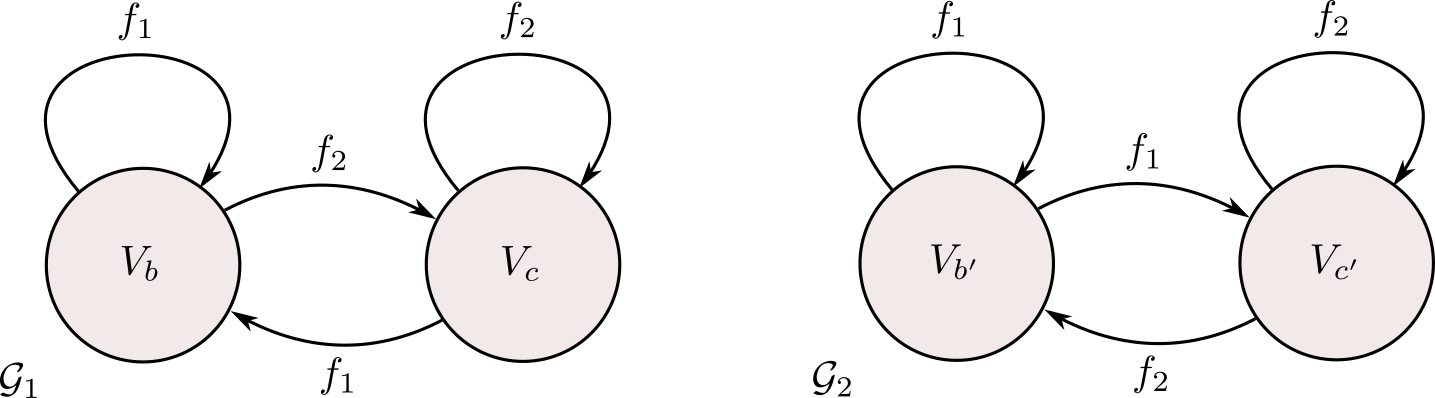}
    \caption{The graphs $\mathcal{G}_1$ and $\mathcal{G}_2$ as in \cite[Ex. 3.9]{ref:philippe2019completeV}.}
    \label{fig:PCLF1}
\end{figure}

We claim, however, that there is no way to construct a $T$-sum lift of either $\mathcal{G}_1$ or $\mathcal{G}_2$ to simulate the other. 

\begin{example}[No $T$-sum lift simulation relation for the graphs in Figure~\ref{fig:PCLF1}]
\label{ex:counter:T-sum}
\begin{upshape}
 Suppose that $\mathcal{G}_1^{\oplus T}=(S_1^{\oplus T},E_1^{\oplus T})$ simulates $\mathcal{G}_2=(S_2,E_2)$, then, there must be a map $R:S_2\to S_1^{\oplus T}$ such that for any $(a,b,i)\in E_2$ we have $(R(a),R(b),i)\in E_1^{\oplus T}$. See that $E_2$ is comprised out of $(b',b',1)$, $(c',c',2)$, $(b',c',1)$ and $(c',b',2)$. Note, we have two-self loops, \textit{e.g.}, we will have $(b',b',1)\mapsto (R(b'),R(b'),1)$. Now consider elements of $E_1^{\oplus T}$, they are of the form $(\{a_1,\dots,a_T\},\{b_1,\dots,b_T\},i)$ with $(a_k,b_k,i)\in E_1$ for all $k\in [T]$ and $\{a_1,\dots,a_T\},\{b_1,\dots,b_T\}\in \mathrm{mset}^T(S_1)$. Given the structure of $E_2$ and $E_1$, we must have $b'\mapsto R(b'):=\{b,\dots,b\}\in \mathrm{mset}^T(S_1)$ and $c'\mapsto R(c'):=\{c,\dots,c\}\in \mathrm{mset}^T(S_1)$. However, then we must also have $(b',c',1)\mapsto (R(b'),R(c'),1)=(\{b,\dots,b\},\{c,\dots,c\},1)\in E_1^{\oplus T}$, yet, $(b,c,1)\notin E_1$, which is a contradiction. This extends to $\mathcal{G}_1^{\oplus}$ not simulating $\mathcal{G}_2$, \textit{e.g.}, by~\cite[Lem. 7.20]{ref:debauche2024path}.
 \exampleEnd    
 \end{upshape}
\end{example}


\begin{corollary}[Quadratic forms cannot be ordered through a sum lift]
\label{cor:T:sum:lift:quadratic}
Theorem~\ref{theorem:T:sum:lift} fails when we restrict to quadratic forms and invertible linear maps, \textit{i.e.}, then $(ii)\centernot\implies(i)$. 
\end{corollary}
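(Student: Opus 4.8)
The plan is to take the pair $\mathcal{G}_1,\mathcal{G}_2$ of Figure~\ref{fig:PCLF1} as the witness. I would show that $\mathcal{G}_1\preceq_{(\{1\},\mathcal{V},\mathcal{F})}\mathcal{G}_2$ when $\mathcal{V}$ is the template of positive definite quadratic forms and $\mathcal{F}$ the set of invertible linear maps, while, by Example~\ref{ex:counter:T-sum}, $\mathcal{G}_1^{\oplus}$ does not simulate $\mathcal{G}_2$. Since quadratic forms are closed under addition (a sum of positive definite quadratic forms is again one), the former is exactly statement $(ii)$ of Theorem~\ref{theorem:T:sum:lift} instantiated at this template and restricted to invertible linear maps, whereas the latter is the negation of $(i)$; so under the restriction $(ii)\centernot\implies(i)$, which is the claim.

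For the ordering I would reuse the composition construction from the paragraph preceding Figure~\ref{fig:PCLF1}. Fix any $F_{\mathcal{A}}=\{A_1,A_2\}$ with $A_1,A_2$ invertible $n\times n$ matrices, and suppose $(\{V_b,V_c\},1)\in\mathrm{pclf}(\mathcal{G}_1,\mathcal{V},F_{\mathcal{A}})$. Put $V_{b'}:=V_b\circ A_1$ and $V_{c'}:=V_c\circ A_2$; because $A_1,A_2$ are invertible these are again positive definite quadratic forms, hence lie in $\mathcal{V}\cap\mathrm{Lyap}_0(\mathbb{R}^n)$. Checking the four edges of $\mathcal{G}_2$ reduces, after an invertible substitution, to the four inequalities of $\mathcal{G}_1$: for instance the edge $(b',c',1)$ asks for $V_b(A_1x)\ge V_c(A_2A_1x)$, which with $y=A_1x$ is the edge $(b,c,2)$ of $\mathcal{G}_1$, and similarly $(b',b',1)\leftrightarrow(b,b,1)$, $(c',c',2)\leftrightarrow(c,c,2)$, $(c',b',2)\leftrightarrow(c,b,1)$. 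Hence $(\{V_{b'},V_{c'}\},1)\in\mathrm{pclf}(\mathcal{G}_2,\mathcal{V},F_{\mathcal{A}})$; note the constant $1$ is untouched by the substitution, so this really witnesses the $\preceq_{(\{1\},\cdot)}$ preorder, not a weaker $\Gamma$. As $F_{\mathcal{A}}$ was arbitrary among invertible linear systems, $\mathcal{G}_1\preceq_{(\{1\},\mathcal{V},\mathcal{F})}\mathcal{G}_2$.

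The failure of $(i)$ is Example~\ref{ex:counter:T-sum}: for every $T$ the two self-loops $(b',b',1)$ and $(c',c',2)$ of $\mathcal{G}_2$ force any simulating map $R:S_2\to S_1^{\oplus T}$ to send $b'\mapsto\{b,\dots,b\}$ and $c'\mapsto\{c,\dots,c\}$, whereupon the edge $(b',c',1)$ would require $(\{b,\dots,b\},\{c,\dots,c\},1)\in E_1^{\oplus T}$, contradicting $(b,c,1)\notin E_1$; this passes to $\mathcal{G}_1^{\oplus}=\cup_{T}\mathcal{G}_1^{\oplus T}$ by \cite[Lem. 7.20]{ref:debauche2024path}. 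Combining the two parts proves the corollary. I do not anticipate a genuine obstacle; the only points that merit care are \emph{(a)} verifying that the quadratic template is legitimately closed under addition (so the hypothesis of the restricted Theorem~\ref{theorem:T:sum:lift} applies) and under composition with invertible linear maps (so the construction above stays inside $\mathcal{V}$), and \emph{(b)} making sure the construction delivers $\gamma=1$ rather than merely some $\gamma>0$, so that the witness refutes condition $(ii)$ itself and not a strictly weaker variant.
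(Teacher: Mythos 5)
Your proposal is correct and is essentially the paper's own argument: the witness pair is $\mathcal{G}_1,\mathcal{G}_2$ of Figure~\ref{fig:PCLF1}, the ordering is obtained by the same composition construction $V_{b'}=V_b\circ A_1$, $V_{c'}=V_c\circ A_2$ (which stays in the quadratic template by invertibility and preserves $\gamma$, hence works for $\Gamma=\{1\}$), and the failure of the sum-lift simulation is exactly Example~\ref{ex:counter:T-sum}. The only difference is that you spell out the four edge checks for $\mathcal{G}_2$ explicitly, which the paper leaves implicit.
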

Corollary~\ref{cor:T:sum:lift:quadratic} raises the question: ``\textit{Is there a lift suitable for the quadratic template and the graphs from Figure~\ref{fig:PCLF1}?}'' 

\section{The composition lift}
Suppose we are given a Lyapunov inequality $V_a(x)\geq V_b(f_i(x))$, \textit{e.g.}, an edge $(a,b,i)\in E$. Now, given some $f_j\in F$, we readily construct the following inequality $V_a(f_j(x))\geq V_b (f_i(f_j(x))$ by substituting $f_j(x)$ for $x$. Then, if we define $V_{(a,j)}:= V_a\circ f_j$ and similarly $V_{(b,i)}:=V_b\circ f_i$ we obtain the new inequality $V_{(a,j)}(x)\geq V_{(b,i)}(f_j(x))$. Now, one might say that it is natural to consider templates that are closed under composition with the dynamics. Utilizing these newly formed inequalities is the \textit{composition lift} \cite[Sec. 7.5]{ref:debauche2024path}. 

\begin{definition}[Composition lifts]
\label{def:composition}
Consider a labeled digraph $\mathcal{G}=(S,E)$ on $\Sigma$. For any $T\in \mathbb{N}_{>0}$, the {$T$-forward composition lift} of $\mathcal{G}$, denoted $\mathcal{G}^{\circ T}=(S^{\circ T},E^{\circ T})$, is defined by (i) $S^{\circ T}:=\{(s,j_1,\dots,j_T)\,|\,s\in S,\,(j_1,\dots,j_T)\in \Sigma^T\}$; and (ii) all $e\in E^{\circ T}$ are such that $e=( (a,j_1,\dots,j_T), (b,i,j_1,\dots,j_{T-1}),j_T)$ for some $(a,b,i)\in E$ and $(j_1,\dots,j_T)\in \Sigma^T$. Let $\mathcal{G}^{\circ 0}:=\mathcal{G}$, then, $\mathcal{G}^{\circ}={\cup}_{T\in \mathbb{N}_{\geq 0}}\mathcal{G}^{\circ T}$ is the forward composition lift. Similarly, the {$T$-backward composition lift} of $\mathcal{G}$, denoted $\mathcal{G}^{\circ - T}=(S^{\circ - T},E^{\circ -T})$, is defined through $S^{\circ -T}:=S^{\circ T}$ with elements of $E^{\circ - T}$ being of the form $((a,i,j_1,\dots,j_{T-1}),(b,j_1,\dots,j_T),j_T)$.  
\end{definition}

\subsection{Properties of the composition lift}
\label{sec:comp:lift:properties}
Note that Definition~\ref{def:composition} is not directly related to stability, as clarified below (\textit{e.g.}, $V_s\circ f_j\notin \mathrm{Lyap}_0(\mathbb{R}^n)$ if $f_j=0$). 

\begin{figure}
    \centering
    \includegraphics[scale=0.6]{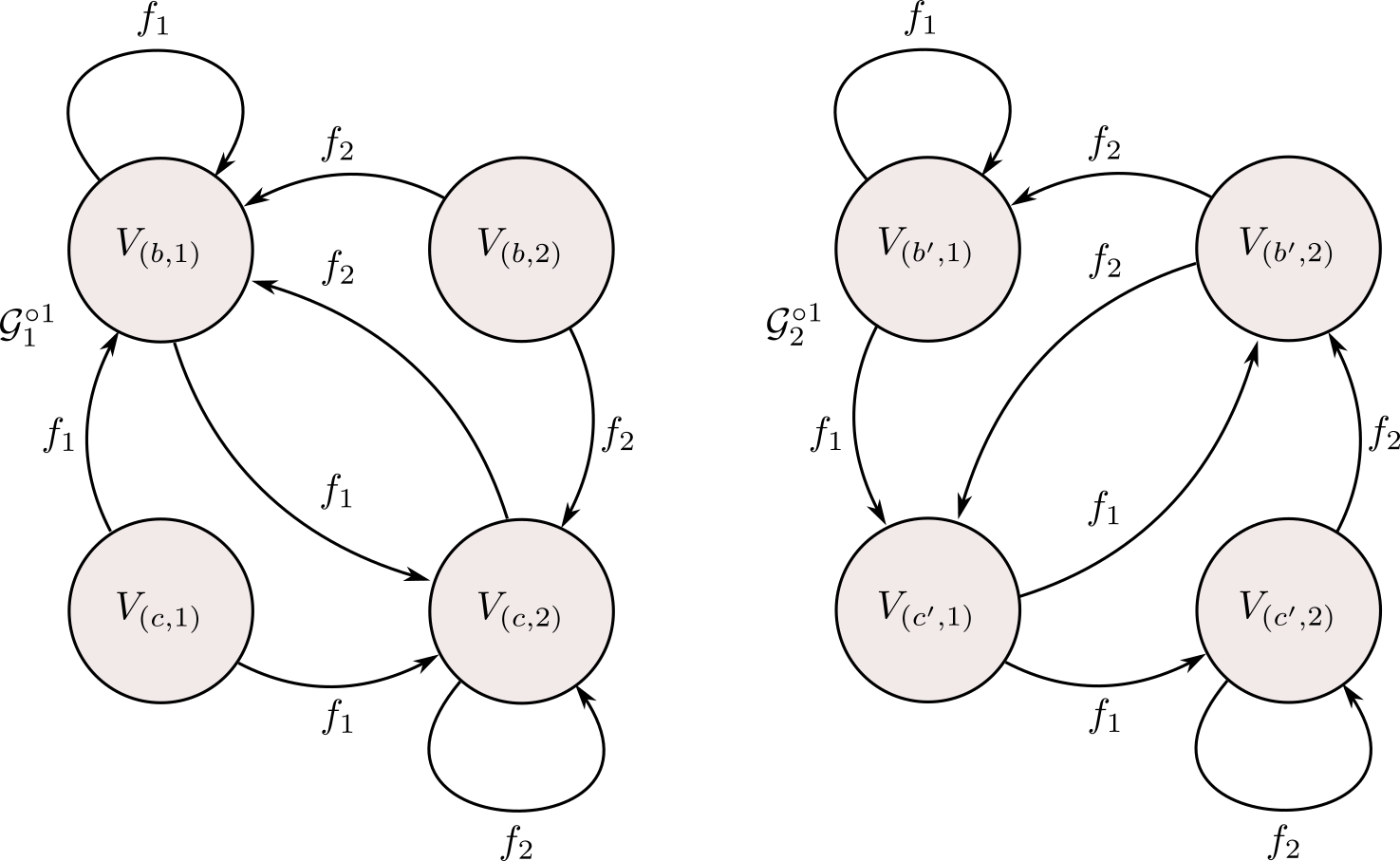}
    \caption{The graphs $\mathcal{G}^{\circ 1}_1$, $\mathcal{G}_2^{\circ 1}$ corresponding to Example~\ref{ex:12lifts} (\textit{i.e.}, $1$-composition lifts of Figure~\ref{fig:PCLF1}).}
    \label{fig:PCLF12lifts}
\end{figure}
\begin{example}[Composition lifts]
\label{ex:12lifts}
\begin{upshape}
First, we construct the forward- and backward composition lift of $\mathcal{G}_0$ from Figure~\ref{fig:connected}, see that $\mathcal{G}_0^{\circ 1}=\mathcal{G}_2$ and $\mathcal{G}_0^{\circ -1}=\mathcal{G}_1$, for $\mathcal{G}_1$ and $\mathcal{G}_2$ as in Figure~\ref{fig:PCLF1}. Note that a $T$-sum lift of $\mathcal{G}_0$ would result in copies of $\mathcal{G}_0$. 
Secondly, we construct $\mathcal{G}_1^{\circ 1}$ and $\mathcal{G}_2^{\circ 1}$, as shown in Figure~\ref{fig:PCLF12lifts}. See that $\mathcal{G}_1^{\circ 1}$ simulates $\mathcal{G}_2$, similarly, $\mathcal{G}_2^{\circ -1}$ simulates $\mathcal{G}_1$. This motivates our studies. \exampleEnd
\end{upshape}
\end{example}

By path-completeness, we can without loss of generality assume that $\mathcal{G}$ is connected. Interestingly, the $T$-sum lift does not preserve this topological property~\cite[Fig.~7.2]{ref:debauche2024path}. 
In contrast, $T$-forward composition lifts are always connected whenever $\mathcal{G}$ is connected and without sinks (\textit{i.e}, all nodes have outgoing edges), \textit{n.b.}, we merely discuss connectedness here, not graphs being \textit{strongly} connected, see Figure~\ref{fig:connected}.

\begin{figure}
    \centering
    \includegraphics[scale=0.6]{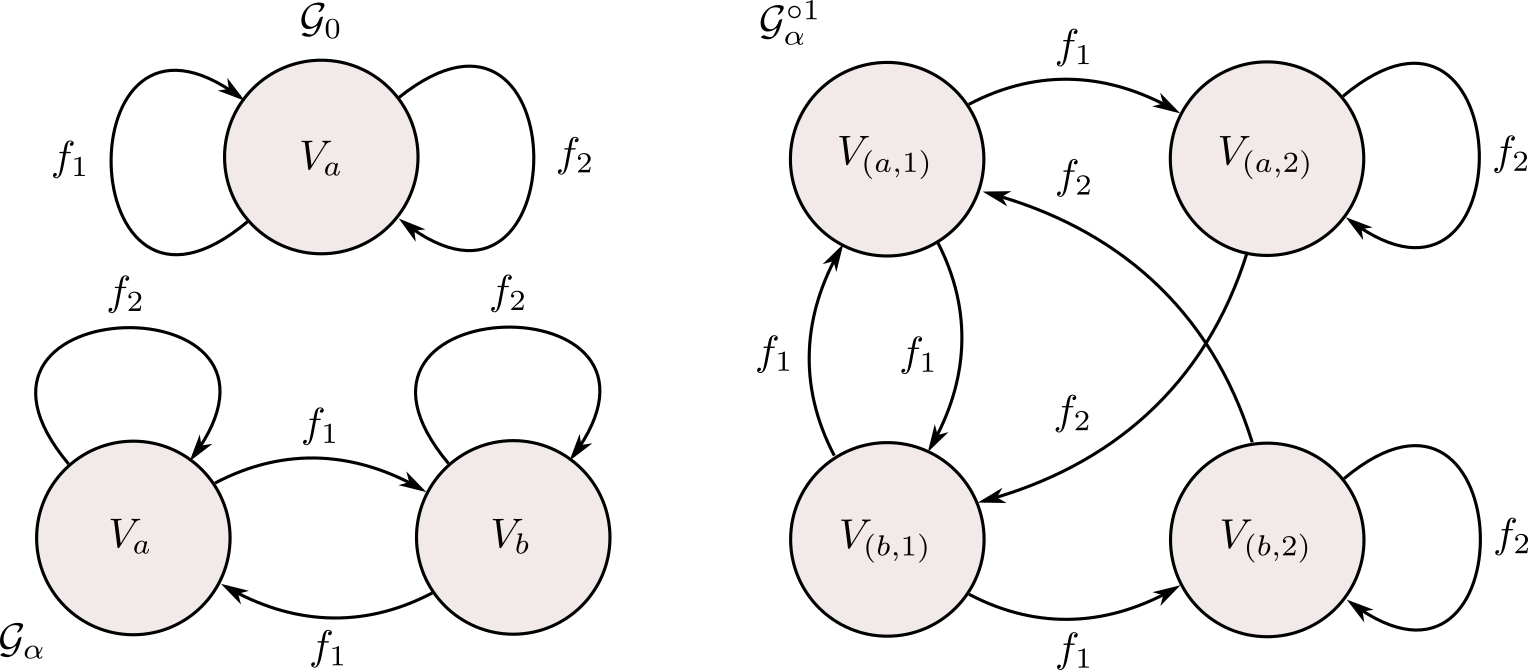}
    \caption{Even for strongly-connected graphs, the $T$-sum lift can result in a graph that fails to be connected (\textit{e.g.} $\mathcal{G}_{\alpha}^{\oplus 2}=\mathcal{G}_{\alpha}\sqcup \mathcal{G}_0$), this, in contrast to the $T$-composition lift.}
    \label{fig:connected}
\end{figure}

\begin{lemma}[$T$-composition lifts preserve connectedness]
\label{lem:connected}
Let $\mathcal{G}=(S,E)\in \mathrm{pc}(\Sigma)$ be connected and sink-free. Then, for any $T\in \mathbb{N}_{\geq 0}$, the graph $\mathcal{G}^{\circ T}$ is connected, similarly, if $\mathcal{G}$ is connected and source-free, then $\mathcal{G}^{\circ -T}$ is connected.
\end{lemma}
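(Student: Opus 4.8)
The plan is to argue directly from the definition of connectedness, reducing the general $T$ case to $T=1$ by induction, and then to handle $T=1$ by tracing paths in $\mathcal{G}$ forward along their edge labels. I will only write the forward case $\mathcal{G}^{\circ T}$; the backward case follows by the same argument applied to the ``reversed'' graph (swap the roles of sources and sinks, and reverse every edge), which is why sink-freeness is replaced by source-freeness there.

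First I would set up the induction on $T$. The base case $T=0$ is the hypothesis, since $\mathcal{G}^{\circ 0}=\mathcal{G}$. For the inductive step it suffices to show: if $\mathcal{H}=(S,E)$ is connected, sink-free and path-complete, then $\mathcal{H}^{\circ 1}$ is connected, and moreover $\mathcal{H}^{\circ 1}$ is again sink-free and path-complete so the induction can proceed (path-completeness of composition lifts is immediate from the construction, and sink-freeness of $\mathcal{H}^{\circ 1}$ follows because every node $(a,j)$ has an outgoing edge labelled $j$ coming from any edge of $\mathcal{H}$ labelled $j$ out of $a$, and such an edge exists by sink-freeness of $\mathcal{H}$ together with, if needed, path-completeness to guarantee some edge labelled $j$ leaves $a$). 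Writing $\mathcal{H}^{\circ 1}=(S\times\Sigma, E^{\circ 1})$, recall that its edges are exactly $\bigl((a,j),(b,i),j\bigr)$ for $(a,b,i)\in E$ and $j\in\Sigma$; note the target label is $i$, the label of the underlying edge of $\mathcal{H}$, and the edge label is the ``memory'' $j$.

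The core step is then to show any two lifted nodes $(a_0,j_0)$ and $(a_1,j_1)$ are joined by an undirected path in $\mathcal{H}^{\circ 1}$. Here is the key observation: an undirected edge between $(a,j)$ and $(b,i)$ in $\mathcal{H}^{\circ 1}$ exists precisely when $(a,b,i)\in E$ (with the first coordinate's memory $j$ unconstrained on that edge). So I would proceed in two moves. First, I fix the memory coordinate: given $(a,j)$ with $j\ne j_0$, I use path-completeness (or just sink-freeness) to pick any edge $(a,b,i)\in E$, which gives an undirected edge $(a,j)\,$---$\,(b,i)$ in $\mathcal{H}^{\circ 1}$, and then from $(b,i)$ I can move to $(a,j')$ for the desired memory $j'$ as long as $(a, b, i)\in E$ — more carefully, I use that for the underlying edge $(a,b,i)$ the lifted node $(b,i)$ connects to $(a,j')$ for \emph{every} $j'$, since the edge $\bigl((a,j'),(b,i),j'\bigr)$ is present for all $j'$. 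Hence within the ``fiber'' $\{a\}\times\Sigma$ all nodes are mutually connected (pivot through any out-neighbour of $a$). Second, having reduced to connecting fibers, I push a connecting walk from $a_0$ to $a_1$ in $\mathcal{H}$ (which exists by connectedness of $\mathcal{H}$) up to $\mathcal{H}^{\circ 1}$: each undirected edge of $\mathcal{H}$ between $a$ and $b$ — say $(a,b,i)\in E$ — lifts to the undirected edge $(a,j)$---$(b,i)$ for a suitable $j$, and since we have just shown fibers are internally connected, we can always adjust the memory coordinate at each waypoint to chain these lifted edges together. Concatenating: $(a_0,j_0)$ to some node in fiber of $a_0$, step to fiber of the next vertex on the $\mathcal{H}$-walk, ..., arrive in fiber of $a_1$, then move within that fiber to $(a_1,j_1)$.

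The main obstacle I anticipate is the bookkeeping around \emph{which} memory values are reachable when traversing an undirected (as opposed to directed) edge: a lifted edge $\bigl((a,j),(b,i),j\bigr)$ constrains the memory on the $(a,j)$ end to equal the edge label but leaves the $(b,i)$ end's first-coordinate memory free, so the asymmetry must be tracked carefully, and this is exactly where sink-freeness is used (to guarantee $a$ has an outgoing edge at all, so that the fiber over $a$ is nonempty of edges and hence internally connected) — without it, a sink vertex $a$ of $\mathcal{G}$ would produce an entire fiber $\{a\}\times\Sigma$ of isolated lifted nodes, breaking connectedness, which matches the picture in Figure~\ref{fig:connected}. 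Once this fiber-connectivity lemma is in hand the rest is a routine concatenation argument.
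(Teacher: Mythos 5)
Your proof is correct and follows essentially the same route as the paper's: induction on $T$ via the identity $\mathcal{G}^{\circ(k+1)}=(\mathcal{G}^{\circ k})^{\circ 1}$, connectivity of each fiber $\{a\}\times\Sigma$ through an out-neighbour $(b,i)$ supplied by sink-freeness, chaining fibers along an undirected walk in the base graph, preservation of sink-freeness to keep the induction going, and the backward case by reversing edges. One small remark: your hedge about needing path-completeness ``to guarantee some edge labelled $j$ leaves $a$'' is unnecessary, since the lifted edge $((a,j),(b,i),j)$ carries label $j$ for \emph{any} underlying edge $(a,b,i)\in E$, so sink-freeness alone suffices, exactly as in the paper.
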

\begin{proof}
    The proof is by induction, we focus on the forward lift, the backward lift is then immediate by duality, \textit{i.e.}, $\mathcal{G}^{\circ -T}=((\mathcal{G}^{\mathsf{T}})^{\circ T})^{\mathsf{T}}$~\cite[Prop. 7.64]{ref:debauche2024path}. By assumption $\mathcal{G}=:\mathcal{G}^{\circ 0}$ is connected. Now, since $\mathcal{G}$ is sink-free, each node $s\in S$ has an outgoing edge, say $(s,b,i)\in E$. Thus, the set of nodes $\{(s,j)\}_{j\in \Sigma}\subset S^{\circ 1}$ is connected via the node $(b,i)$. Similarly, for the set of nodes $\{(b,j)\}_{j\in \Sigma}$. Since $\mathcal{G}$ itself is connected, all those sets of nodes are connected and thus $\mathcal{G}^{\circ 1}$ is connected. Note that $\mathcal{G}^{\circ 1}$ is again sink-free as for any $(a,j)\in S^{\circ 1}$ there must be a $(b,i)\in S^{\circ 1}$ such that $((a,j),(b,i),j)\in E^{\circ 1}$, this by $\mathcal{G}^0$ being sink-free. Then, see that $\mathcal{G}^{\circ(k+1)}=(\mathcal{G}^{\circ k})^{\circ 1}$ for any $k\in \mathbb{N}_{\geq 0}$, where we can interpret, by induction, $\mathcal{G}^{\circ k}$ as another connected graph.
\end{proof}

We remark that the sink-free assumption is there to avoid pathologies, akin to~\cite[Ass.~7.1]{ref:debauche2024path}.

\begin{assumption}[Non-redundant graphs]
\label{ass:pclf}
Path-complete graphs under consideration are strongly-connected and such that if we remove any edge, the graph is not
path-complete.
\end{assumption}


Ideally, the composition lift is ``\textit{valid}'', \textit{i.e.}, $\mathcal{G}\preceq_{\mathcal{V},\mathcal{F}}\mathcal{G}^{\circ}$ for any $\mathcal{V}$ that is closed under the forward composition with $\mathcal{F}$. When $\mathcal{F}$ contains non-invertible maps, this cannot be guaranteed, Lyapunov functions can lose coercivity after composition \textit{e.g.}, $V\circ f\notin \mathrm{Lyap}_0(\mathbb{R}^n)$ while $V\in \mathrm{Lyap}_0(\mathbb{R}^n)$, thus, we work with invertible maps. To further comment on the relevance of the composition lift, we elaborate on \cite{ref:bliman2003stability}.

\begin{lemma}[A converse Lyapunov lemma]
\label{lem:converse}
Let $0\in \mathbb{R}^n$ be UGAS under a switched system~\eqref{equ:switched}, with $F:=\{f_i\,|\,i\in \Sigma\}$ being some invertible linear maps, then, there is a set of multiple Lyapunov functions, contained in the forward compositional closure of $x\mapsto \|x\|_2^2$ under $F$, to assert this. 
\end{lemma}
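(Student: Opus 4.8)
The plan is to exhibit an explicit path-complete triple $(\mathcal{G},\{V_s\},\gamma)$ with $\gamma>1$ all of whose Lyapunov functions lie in the forward compositional closure of $V_0:=\|\cdot\|_2^2$ under $F$, and then read off UGAS from Theorem~\ref{thm:UGAS}; since UGAS is the hypothesis, this establishes the existence claim. The first step is to convert the hypothesis into a finite-horizon contraction: writing $f_i=A_i$, UGAS of~\eqref{equ:switched} under arbitrary (in particular worst-case) switching is equivalent to $\rho(\mathcal{A})<1$~\cite{ref:jungers2009joint}; fixing $\delta\in(\rho(\mathcal{A}),1)$, the identity $\rho(\mathcal{A})=\lim_k\sup_{|w|=k}\|A_w\|^{1/k}$ yields an $N\ge 2$ with $\|A_{w_1}A_{w_2}\cdots A_{w_N}\|_2\le\delta^N$ for every word $(w_1,\dots,w_N)\in\Sigma^N$. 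This contraction is the only way the hypothesis enters the construction.

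Next I would build the certificate along the lines of the nonminimal-state Lyapunov functions of~\cite{ref:bliman2003stability}. Let $\mathcal{G}=(S,E)$ have node set $S=\Sigma^{N-1}$ and, for each $h=h_1h_2\cdots h_{N-1}\in S$ and each $i\in\Sigma$, an edge $(h,\;h_2h_3\cdots h_{N-1}i,\;h_1)$ labeled by the dropped first letter $h_1$; $\mathcal{G}$ is path-complete by the usual ``flush-out'' argument (given an input word, start at the node made of its first $N-1$ letters and read letters off one at a time, each step appending the next input letter). To the node $h$ I attach
\[
 V_h\;:=\;\sum_{j=0}^{N-1}\gamma^{\,j}\,\bigl(V_0\circ f_{h_j}\circ f_{h_{j-1}}\circ\cdots\circ f_{h_1}\bigr),\qquad \gamma\in(1,\delta^{-2}),
\]
with the $j=0$ summand read as $V_0$. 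Each $V_h$ is a positive combination of iterated forward compositions of $V_0$ with maps of $F$, hence lies in the forward compositional closure of $V_0$ once that closure is — as is natural for a template — taken closed under positive scalings and finite sums, in line with the addition-closure used for the sum lift in Section~\ref{sec:prelim}; and since the $j=0$ summand equals $\|x\|_2^2$ one gets $\|x\|_2^2\le V_h(x)\le c_h\|x\|_2^2$, so $V_h\in\mathrm{Lyap}_0(\mathbb{R}^n)$.

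It remains to verify~\eqref{equ:pclf:ineq} on every edge $(h,h',h_1)$ with $h'=h_2\cdots h_{N-1}i$. Substituting $f_{h_1}(x)$ and using that $h'$ is the left-shift of $h$, the coefficient of each $m$-step product $A_{h_m}\cdots A_{h_1}$ in $V_h(x)$ and in $\gamma V_{h'}(f_{h_1}(x))$ coincides (both equal $\gamma^{m}$), so all ``interior'' terms cancel telescopically, leaving only
\[
 V_h(x)-\gamma V_{h'}(f_{h_1}(x))\;=\;\|x\|_2^2-\gamma^{N}\,\bigl\|A_iA_{h_{N-1}}A_{h_{N-2}}\cdots A_{h_1}\,x\bigr\|_2^2\;\ge\;\bigl(1-(\gamma\delta^2)^{N}\bigr)\|x\|_2^2\;\ge\;0,
\]
because $A_iA_{h_{N-1}}\cdots A_{h_1}$ is a product of exactly $N$ matrices of $\mathcal{A}$ and $\gamma\delta^2<1$. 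Hence $(\{V_h\}_{h\in S},\gamma)\in\mathrm{pclf}(\mathcal{G},\mathcal{V},F)$ with $\gamma>1$, and Theorem~\ref{thm:UGAS} certifies UGAS, completing the argument.

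The genuine difficulty is interpretive rather than computational: one must pin down what ``forward compositional closure'' is taken to mean, and in particular argue that finite positive combinations must be allowed. A \emph{single} function $\|A_w\cdot\|_2^2$ per node cannot work in general: admissibility of an edge $(u,v,i)$ forces $\|A_vA_iA_u^{-1}\|_2<1$, and writing this product out, the only cancellation available requires the word at $u$ to be a suffix of $(\text{word at }v)\cdot i$ with a complementary prefix long enough to be a contraction, which makes the word at $v$ strictly longer than the word at $u$ — impossible to sustain around a cycle of a finite strongly-connected graph (Assumption~\ref{ass:pclf}). The remaining ingredients — the $\rho(\mathcal{A})<1\Leftrightarrow$ UGAS reduction, path-completeness of $\mathcal{G}$, and the $\gamma>1$ arithmetic — are routine.
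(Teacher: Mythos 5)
Your construction itself is sound (the future-type De Bruijn graph on $\Sigma^{N-1}$, the geometric weights $\gamma^j$, and the telescoping identity $V_h(x)-\gamma V_{h'}(A_{h_1}x)=\|x\|_2^2-\gamma^N\|A_iA_{h_{N-1}}\cdots A_{h_1}x\|_2^2$ all check out), but it does not prove the statement as the paper means it, and the step where you argue it must be reinterpreted is wrong. The lemma asserts that the certificate can be taken \emph{inside} the forward compositional closure of $V_q=\|\cdot\|_2^2$ under $F$, which the paper's proof explicitly takes to be $V_q\cup\{V_q\circ f_{i_1}\circ\cdots\circ f_{i_T}\}$ --- single compositions only, no positive combinations. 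Your node functions are weighted sums of such compositions, i.e.\ objects of sum-lift type, which is precisely what the lemma (placed to motivate the composition lift, and followed by the remark that the functions live in \emph{different} $T$-forward composition lifts) is designed to avoid. Widening the closure to allow sums and scalings changes the statement into one that the sum lift already handles.

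The justification you give for that widening --- that ``a single function $\|A_w\cdot\|_2^2$ per node cannot work in general'' --- is refuted by the paper's own argument. From UGES one gets $K$ such that every length-$K$ product is a strict contraction for $\|\cdot\|_2^2$, so $V_q$ is a common Lyapunov function for the single-node graph whose self-loops are labeled by all words of length $K$; passing to the expanded form of that graph, the auxiliary node along the petal for the word $i_1\cdots i_K$ after $m$ letters carries exactly $V_q\circ A_{i_K}\cdots A_{i_{m+1}}$, a pure composition with at most $K-1$ maps. On every interior edge the conjugated operator $A_{w_v}A_iA_{w_u}^{-1}$ is the identity (the inequality holds as an equality, word length \emph{dropping} by one), and the $K$-step contraction is spent only on the edge leaving the root, where the word length jumps up by $K-1$. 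Your cycle argument implicitly requires the suffix-cancellation (and hence a strict length increase) at every edge, so it overlooks exactly these norm-one identity edges; moreover the certificate graph in the lemma need not satisfy Assumption~\ref{ass:pclf}, so restricting the impossibility argument to strongly-connected non-redundant graphs is a further unwarranted step. If you insist on a uniform $\gamma>1$ rather than the paper's reading (strict $K$-step contraction plus Theorem~\ref{thm:pclf:JSR} applied to suitably scaled dynamics), that is a point where the paper is terse, but it does not license replacing compositions by sums: the missing idea in your write-up is the expanded-form construction, not closure under addition.
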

\begin{proof}
Suppose that $F$ comprises a switched linear system rendering $0$ UGES (\textit{e.g.}, there are $M,\alpha>0$ such that $\|\varphi^k(x_0)\|^2_2\leq M e^{-\alpha k}\|x_0\|^2_2$, with $\varphi^k(x_0)$ denoting an arbitrary solution under~\eqref{equ:switched}, starting from $x_0$, after $k$ timesteps). We claim there are multiple Lyapunov functions within the forward compositional closure of the quadratic function $x\mapsto V_q(x):=\|x\|_2^2$ under $F$, that is, within $V_q\cup\{V_q\circ f_{i_1}\circ \cdots \circ f_{i_T}\,|\, f_{i_1},\dots,f_{i_T}\in F,\,T\in \mathbb{N}_{>0}\}$. To show this, we can directly follow~\cite{ref:bliman2003stability}. Specifically, by the UGES property we know there is a $K'\in \mathbb{N}_{\geq 0}$ such that $M e^{-\alpha K'}<1$ and thus as $\varphi^K(x_0)=A_{i_K}\cdots A_{i_1}x_0$ we get that $(A_{i_K}\cdots A_{i_1})^{\mathsf{T}}(A_{i_K}\cdots A_{i_1}) \prec I_n$ must hold for any $(i_K,\dots,i_1)\in \Sigma^K$ and $K\geq K'$. Thus, $V_q$ is a common Lyapunov function for all words of length $K\geq K'$. Now, fix $K$ and construct the ``\textit{expanded form}'' of this graph~\cite[Sec. 2]{ref:ahmadi2014joint}, and see that all nodes are elements of the forward compositional closure of $V_q$ (with no more than $K-1$ compositions). 
The claim follows by the equivalence of UGES and UGAS for linear switched systems~\cite{ref:bhaya1994equivalence}. 
\end{proof}

We already point out that the converse result from Lemma~\ref{lem:converse} indicates that the multiple Lyapunov functions need not naturally live in the \textit{same} $T$-forward composition lift, \textit{i.e.}, we might need elements from $\mathcal{G}^{\circ T_1}$ and $\mathcal{G}^{\circ T_2}$ for $T_1\neq T_2$. This observation is the crux of the next section.  

\section{Main results}
First, we provide a generalization of \cite{ref:philippe2019completeV}, implying that ``$\leq$'' and ``$\preceq$'' are inducing equivalent preorders on $\mathrm{pc}(\Sigma)$.
\begin{lemma}[Simulation relations and Definition~\ref{def:preorder}]
\label{lemma:simulation:order}
Let $\mathcal{G}_1,\mathcal{G}_2\in \mathrm{pc}(\Sigma)$, then, $\mathcal{G}_1$ simulates $\mathcal{G}_2$ if and only if $\mathcal{G}_1\preceq \mathcal{G}_2$. 
\end{lemma}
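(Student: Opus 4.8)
The plan is to establish the two implications separately; I expect the crux to be realizing that the extra freedom in the gain $\gamma\in\Gamma$ allowed by Definition~\ref{def:preorder}, compared with the classical $\gamma=1$ comparison ``$\leq$'', neither enlarges nor shrinks the induced preorder, so that the known characterization of ``$\leq$'' via simulation~\cite{ref:philippe2019completeV} transfers.

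For sufficiency (simulation $\Rightarrow\mathcal{G}_1\preceq\mathcal{G}_2$) I would argue directly by relabeling. Assume $\mathcal{G}_1$ simulates $\mathcal{G}_2$ through $R\colon S_2\to S_1$ with $(a,b,i)\in E_2\Rightarrow(R(a),R(b),i)\in E_1$, and fix an arbitrary $\Gamma\subseteq\mathbb{R}_{>0}$, template $\mathcal{V}$, family $\mathcal{F}$ and $F\in\mathcal{F}^{|\Sigma|}$. Given $(V_{S_1},\gamma)\in\mathrm{pclf}(\mathcal{G}_1,\mathcal{V},F)$ with $\gamma\in\Gamma$, I would set $W_s:=V_{R(s)}$ for every $s\in S_2$; each $W_s$ is again an element of $\mathrm{Lyap}_0(\mathbb{R}^n)\cap\mathcal{V}$ because it is literally one of the functions in $V_{S_1}$ (so no closure hypothesis on $\mathcal{V}$ is needed). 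For every $(a,b,i)\in E_2$ the edge $(R(a),R(b),i)$ belongs to $E_1$, whence $W_a(x)=V_{R(a)}(x)\geq\gamma V_{R(b)}(f_i(x))=\gamma W_b(f_i(x))$ for all $x$, with the same $\gamma\in\Gamma$. Thus $(W_{S_2},\gamma)\in\mathrm{pclf}(\mathcal{G}_2,\mathcal{V},F)$, i.e.\ $\mathcal{G}_1\preceq_{(\Gamma,\mathcal{V},\mathcal{F})}\mathcal{G}_2$, and since $\Gamma,\mathcal{V},\mathcal{F}$ were arbitrary, $\mathcal{G}_1\preceq\mathcal{G}_2$.

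For necessity ($\mathcal{G}_1\preceq\mathcal{G}_2\Rightarrow$ simulation) I would specialize the universal quantifier over $\Gamma$ to the singleton $\Gamma=\{1\}$: as $\{1\}\subseteq\mathbb{R}_{>0}$, the hypothesis $\mathcal{G}_1\preceq\mathcal{G}_2$ contains as a particular case the statement that for every template $\mathcal{V}$ and family $\mathcal{F}$, feasibility of $\mathcal{G}_1$ with gain $1$ forces feasibility of $\mathcal{G}_2$ with gain $1$ --- that is, $\mathcal{G}_1\leq\mathcal{G}_2$ in the notation of the introduction. Invoking the recalled equivalence $\mathcal{G}_1\leq\mathcal{G}_2\iff\mathcal{G}_1$ simulates $\mathcal{G}_2$~\cite{ref:philippe2019completeV} then produces the required simulation map. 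If one prefers not to borrow~\cite{ref:philippe2019completeV} as a black box, I would instead reproduce its witness: assuming no simulation relation $S_2\to S_1$ exists, one constructs a concrete set of invertible linear maps together with a template (e.g.\ generated from coordinate functions on $\mathbb{R}^{|S_1|}$) for which $\mathcal{G}_1$ admits a path-complete Lyapunov function but $\mathcal{G}_2$ does not, the obstruction being a single edge forced on $\mathcal{G}_2$ that is absent from $E_1$, exactly the mechanism already visible in Example~\ref{ex:counter:T-sum}.

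The only genuinely delicate part is this necessity direction, and in the route above the difficulty is entirely delegated to~\cite{ref:philippe2019completeV}. The novel content is the easy sufficiency computation together with the specialization step: they jointly show that allowing an arbitrary admissible gain $\gamma\in\Gamma$ in place of $1$ does not change the comparison, so ``$\leq$'' and ``$\preceq$'' induce the very same preorder on $\mathrm{pc}(\Sigma)$, namely the simulation preorder.
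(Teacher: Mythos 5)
Your proposal is correct, and it rests on the same pillar as the paper's proof: the known equivalence $\mathcal{G}_1\leq\mathcal{G}_2\iff\mathcal{G}_1$ simulates $\mathcal{G}_2$ from~\cite{ref:philippe2019completeV} for $\Gamma=\{1\}$. The difference lies in how the gain $\gamma$ is disposed of. You observe that for the lemma as stated no adjustment of the quadratic construction is needed at all: the easy direction carries the \emph{same} $\gamma\in\Gamma$ through the relabeling $W_s:=V_{R(s)}$ (correctly noting no closure hypothesis on $\mathcal{V}$ is required), and the hard direction follows by instantiating the universal quantifier over $\Gamma$ at the singleton $\{1\}$ and invoking the cited theorem. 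This is a legitimate and more economical route to the stated equivalence. The paper instead adjusts the witness construction of \cite[Thm.~3.6]{ref:philippe2019completeV} by the rescaling $x^{\mathsf{T}}P_a x \geq \gamma(\gamma^{-1/2}A_i x)^{\mathsf{T}}P_b(\gamma^{-1/2}A_i x)$, i.e., trading the gain $\gamma$ against a scaling of the matrices; this extra work is what lets the paper claim it ``shows slightly more than the statement'': the characterization survives at a fixed, arbitrary gain level rather than only after quantifying over all $\Gamma$, which is the version relevant when one insists on $\gamma>1$ for UGAS (Theorem~\ref{thm:UGAS}) rather than $\gamma=1$. So your argument proves the lemma but not this sharper by-product. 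One minor caveat: your sketched fallback for the necessity direction (reconstructing the counterexample ``from coordinate functions on $\mathbb{R}^{|S_1|}$'') is too vague to stand on its own, but since your primary route cites \cite{ref:philippe2019completeV} exactly as the paper does, this does not affect correctness.
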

\begin{proof}
It is known for $\Gamma=\{1\}$~\cite[Thm. 3.5]{ref:philippe2019completeV}, adjusting the proof of \cite[Thm. 3.6]{ref:philippe2019completeV}, we can readily handle the factor $\gamma$, \textit{i.e.}, going from $x^{\mathsf{T}}P_a x \geq (A_ix)^{\mathsf{T}}P_b (A_i x)$ to $x^{\mathsf{T}}P_a x \geq \gamma (\gamma^{-1/2}A_ix)^{\mathsf{T}}P_b (\gamma^{-1/2}A_i x)$, for $P_a,P_b\succ 0$. Indeed, this shows slightly more than the statement.    
\end{proof}

\subsection{Simulation through the composition lift}
Initial work on the composition lift appeared in \cite[Prop. 4.2]{ref:ahmadi2014joint}, \cite[Ex. IV.11]{ref:philippe2018path} and \cite[Ex. 3.9]{ref:philippe2019completeV}, with the first formalizations appearing in \cite[Sec. 3.2]{ref:debauche2022comparison}. The most detailled study can be found in~\cite[Ch. 6-8]{ref:debauche2024path}. Ever since the formalization of lifts, the following conjecture was expected to be true. In particular, this is Conjecture 8.20 from~\cite{ref:debauche2024path}. 

\begin{conjecture}[The composition lift {\cite[Conj. 8.20]{ref:debauche2024path}}]
\label{conj:forward:composition}
Consider $\mathcal{G}, \widetilde{\mathcal{G}}\in \mathrm{pc}(\Sigma)$, satisfying Assumption~\ref{ass:pclf}. Suppose that $\mathcal{F}$ comprises a subset of linear invertible maps, then, the following two statements are equivalent 
\begin{enumerate}[(i)]
    \item $\mathcal{G}^{\circ}$ simulates $\widetilde{\mathcal{G}}$
    \item $\mathcal{G}\leq_{\mathcal{V},\mathcal{F}} \widetilde{\mathcal{G}}$ for any template $\mathcal{V}$ closed under forward composition with the class of dynamics $\mathcal{F}$.
\end{enumerate}
\end{conjecture}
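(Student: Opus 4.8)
The plan is to prove the two implications of Conjecture~\ref{conj:forward:composition} by entirely different means: $(i)\Rightarrow(ii)$ is a \emph{soundness} statement that I expect to go through, while $(ii)\Rightarrow(i)$ is a \emph{completeness} statement that I expect to be the real obstacle. A first reduction simplifies $(i)$. Since $\widetilde{\mathcal{G}}$ is strongly connected by Assumption~\ref{ass:pclf} and each level $\mathcal{G}^{\circ T}$ is a connected graph by Lemma~\ref{lem:connected}, whereas distinct levels of $\mathcal{G}^{\circ}=\bigcup_{T}\mathcal{G}^{\circ T}$ share neither nodes nor edges, any simulation map $R\colon \widetilde{S}\to S^{\circ}$ must have its image inside a single $\mathcal{G}^{\circ T}$. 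So $(i)$ is equivalent to: \emph{there is a $T\in\mathbb{N}_{\geq 0}$ such that $\mathcal{G}^{\circ T}$ simulates $\widetilde{\mathcal{G}}$}, and I would work with this reformulation throughout.

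For $(i)\Rightarrow(ii)$, fix such a $T$ and a simulation $R$, a template $\mathcal{V}$ closed under forward composition with $\mathcal{F}$, a set $F=\{f_i\,|\,i\in\Sigma\}\subseteq\mathcal{F}$, and an admissible pair $(V_S,\gamma)\in\mathrm{pclf}(\mathcal{G},\mathcal{V},F)$. Writing $R(\tilde s)=(s,j_1,\dots,j_T)$, put $\widetilde{V}_{\tilde s}:=V_s\circ f_{j_1}\circ\cdots\circ f_{j_T}$, following Definition~\ref{def:composition}. Three checks remain: $\widetilde{V}_{\tilde s}\in\mathcal{V}$ by closure of $\mathcal{V}$; $\widetilde{V}_{\tilde s}\in\mathrm{Lyap}_0(\mathbb{R}^n)$ because composing a coercive positive-definite function with the \emph{invertible} linear maps in $\mathcal{F}$ preserves coercivity and positive-definiteness, which is precisely where invertibility is used; and that the inequality~\eqref{equ:pclf:ineq} transfers along every $(\tilde a,\tilde b,i)\in\widetilde{E}$, since the image edge $(R(\tilde a),R(\tilde b),i)\in E^{\circ T}$ arises from some $(a,b,i')\in E$ and substituting the relevant composition of the $f_{j_\ell}$ into $V_a(x)\geq\gamma V_b(f_{i'}(x))$ reproduces $\widetilde{V}_{\tilde a}(x)\geq\gamma\widetilde{V}_{\tilde b}(f_i(x))$. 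Hence $(\widetilde{V}_{\widetilde{S}},\gamma)\in\mathrm{pclf}(\widetilde{\mathcal{G}},\mathcal{V},F)$; since $F$ and $\mathcal{V}$ were arbitrary this gives $\mathcal{G}\leq_{\mathcal{V},\mathcal{F}}\widetilde{\mathcal{G}}$. Equivalently, the fixed-level lift is valid, $\mathcal{G}\leq_{\mathcal{V},\mathcal{F}}\mathcal{G}^{\circ T}$, and $\mathcal{G}^{\circ T}$ simulating $\widetilde{\mathcal{G}}$ gives $\mathcal{G}^{\circ T}\leq_{\mathcal{V},\mathcal{F}}\widetilde{\mathcal{G}}$ via Lemma~\ref{lemma:simulation:order}; then compose.

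For $(ii)\Rightarrow(i)$ I would try to imitate the completeness arguments behind Lemma~\ref{lemma:simulation:order} and Theorem~\ref{theorem:T:sum:lift}: assuming no level $\mathcal{G}^{\circ T}$ simulates $\widetilde{\mathcal{G}}$, for each $T$ use the genericity characterization of simulation to exhibit a switched linear system $F_T$ (matrices with algebraically independent entries) which, under the quadratic template, closed under composition with linear maps, is admissible for $\mathcal{G}^{\circ T}$ but not for $\widetilde{\mathcal{G}}$, then ``descend'' this to an $F_T$ admissible for $\mathcal{G}$, contradicting $(ii)$. This is where I expect the argument to break, for two reasons. First, descending from admissibility of $\mathcal{G}^{\circ T}$ to admissibility of $\mathcal{G}$ is not a formal corollary of the lift and must be argued directly for invertible linear maps. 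Second, and decisively, $\mathcal{G}^{\circ}$ is an \emph{infinite} family $\{\mathcal{G}^{\circ T}\}_{T}$: a witness defeating $\mathcal{G}^{\circ T}$ at one depth need not defeat it at larger depths, and Lemma~\ref{lem:converse} shows that the Lyapunov functions certifying $\widetilde{\mathcal{G}}$ for a given system may genuinely require compositions of \emph{unbounded} depth. It is therefore entirely plausible that $\mathcal{G}\leq_{\mathcal{V},\mathcal{F}}\widetilde{\mathcal{G}}$ for every composition-closed $\mathcal{V}$ while no single finite $\mathcal{G}^{\circ T}$ simulates $\widetilde{\mathcal{G}}$, i.e.\ that $(ii)\Rightarrow(i)$ is \emph{false}. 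Accordingly, rather than push the proof of $(ii)\Rightarrow(i)$, I would pivot to a counterexample: a pair $\mathcal{G},\widetilde{\mathcal{G}}$ meeting Assumption~\ref{ass:pclf} with $\mathcal{G}\leq_{\mathcal{V},\mathcal{F}}\widetilde{\mathcal{G}}$ for all composition-closed $\mathcal{V}$ but $\mathcal{G}^{\circ T}$ failing to simulate $\widetilde{\mathcal{G}}$ for every $T$, searching for a $\widetilde{\mathcal{G}}$ whose admissible configurations unavoidably mix composition depths, in the spirit of the graphs in Figure~\ref{fig:PCLF1} and of the converse construction of Lemma~\ref{lem:converse}. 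The natural repair this analysis already suggests is to let the simulation target be an iterated or depth-mixed version of the composition lift rather than a single level.
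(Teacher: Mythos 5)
Your proposal gets the sound direction right and correctly suspects the answer, but it does not actually settle the statement. The paper's resolution of this conjecture is a \emph{refutation}: $(i)\Rightarrow(ii)$ is already known (it follows from the cited lift-validity result together with Lemma~\ref{lemma:simulation:order}, and your soundness argument is essentially a correct re-proof of it, including the single-level reduction, which is valid since $\widetilde{\mathcal{G}}$ is connected and edges of $\mathcal{G}^{\circ}$ never cross levels), whereas $(ii)\Rightarrow(i)$ is \emph{false}, and establishing that requires an explicit pair of graphs together with two verifications: that the order $\mathcal{G}\leq_{\mathcal{V},\mathcal{F}}\widetilde{\mathcal{G}}$ holds for every template closed under forward composition, and that no level $\mathcal{G}^{\circ T}$ simulates $\widetilde{\mathcal{G}}$. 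Your proposal stops at a search plan (``pivot to a counterexample, searching for \dots''), produces no candidate graphs, and carries out neither verification. Since the counterexample \emph{is} the content of the result (Example~\ref{ex:conjecture:counter}, graphs $\mathcal{G}_{\varphi},\mathcal{G}_{\psi}$ of Figure~\ref{fig:conjecture:counter}), this is a genuine gap, not a stylistic difference.

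Moreover, the mechanism you anticipate is not the one at work. You expect failure because certificates might require compositions of unbounded depth, so that each finite level would have to be defeated by a different generic witness system; in the actual counterexample the obstruction is uniform and purely combinatorial: by Definition~\ref{def:composition}, every lifted node $(a,j_1,\dots,j_T)$ with $T\geq 1$ has all outgoing edges labeled $j_T$, hence no lifted node can simulate the node $a'$ of $\mathcal{G}_{\psi}$, which carries the two differently labeled edges $(a',b',1)$ and $(a',b',2)$; the base level $\mathcal{G}_{\varphi}$ is then excluded by a short case analysis. On the order side, only depth-one compositions are needed: $V_{a'}:=V_a$ and $V_{b'}:=V_a\circ A_2$ work, obtained by \emph{chaining} two inequalities of $\mathcal{G}_{\varphi}$ (e.g.\ $V_a(x)\geq V_b(A_1x)$ and $V_b(x)\geq V_a(A_2x)$ give $V_a(x)\geq V_a(A_2A_1x)$). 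So the missing ingredient is transitivity of the inequalities, not unbounded depth, which is why the paper's repair is the transitive closure $(\mathcal{G}^{\circ})^{+}$; your suggested ``depth-mixed'' lift points in the same direction, but without the concrete counterexample the refutation of $(ii)\Rightarrow(i)$ is not established.
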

To clarify, when we write ``\textit{$\mathcal{G}\leq_{\mathcal{V},\mathcal{F}}\widetilde{\mathcal{G}}$ for any template $\mathcal{V}$, closed under forward composition with $\mathcal{F}$}'', we follow~\cite{ref:debauche2024path} and mean the following. For any template $\mathcal{V}$ such that, $\forall n\in \mathbb{N}_{> 0}$, $V\in \mathcal{V}_n\implies V\circ f\in \mathcal{V}_n\,\forall f\in \mathcal{F}_n$, we have that for any $F\in \mathcal{F}^{|\Sigma|}$ $    \exists (V_{S}\subseteq \mathcal{V})\in \mathrm{pclf}(\mathcal{G},\mathcal{V},F) \implies   \exists (V_{\widetilde{S}}\subseteq \mathcal{V})\in  \mathrm{pclf}(\widetilde{\mathcal{G}},\mathcal{V},F)$. Now, regarding Conjecture~\ref{conj:forward:composition}, it is known that $(i)\implies (ii)$ is true for ``$\leq$'' \cite[Thm. 7.65]{ref:debauche2024path} (\textit{i.e.}, conditioned on using invertible maps) and additionally by Lemma~\ref{lemma:simulation:order} for ``$\preceq$'', recall also Figure~\ref{fig:PCLF1}. However, by means of a counterexample we show that $(ii)\implies (i)$ fails. We focus on the preorder ``$\leq $'' and linear maps for simplicity. 

\begin{figure}
    \centering
   \includegraphics[scale=0.6]{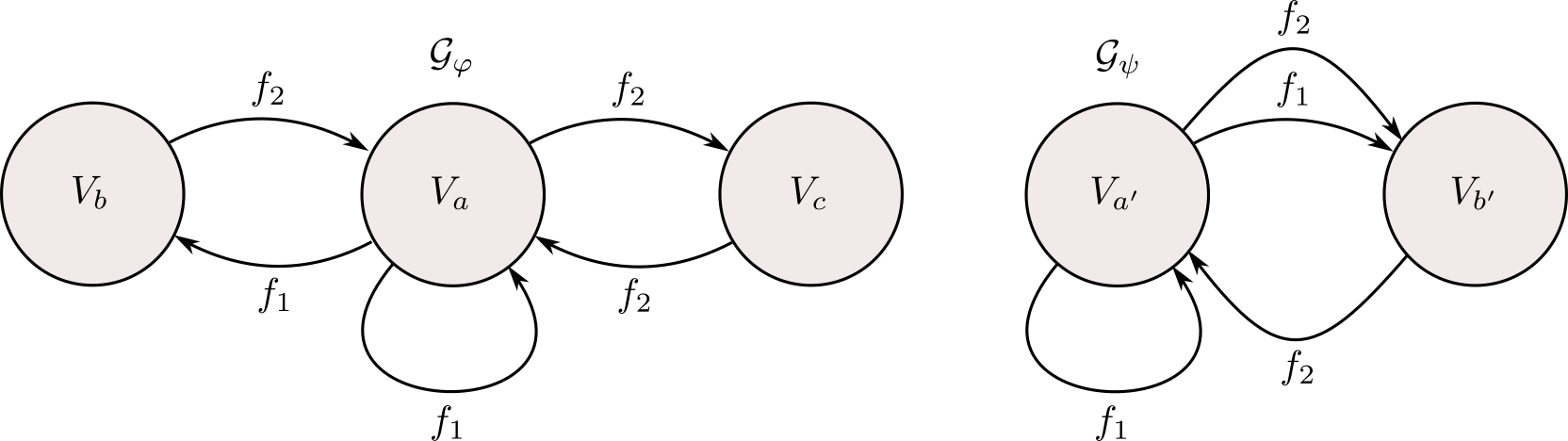}
    \caption{The graphs corresponding to Example~\ref{ex:conjecture:counter}.}
    \label{fig:conjecture:counter}
\end{figure}

\begin{example}[Conjecture~\ref{conj:forward:composition}: $(ii)\centernot\implies(i)$]
\label{ex:conjecture:counter}
\begin{upshape}
Consider the graphs $\mathcal{G}_{\varphi}$ and $\mathcal{G}_{\psi}$ from Figure~\ref{fig:conjecture:counter}. Both graphs are path-complete on $\Sigma=\{1,2\}$ and satisfy Assumption~\ref{ass:pclf}. In Step 1 we show that $\mathcal{G}_{\varphi}\leq_{\mathcal{V},\mathcal{F}}\mathcal{G}_{\psi}$ for any $\mathcal{V}$ that is closed under forward composition with $\mathcal{F}$. In Step 2 we show that despite Step 1, $\mathcal{G}_{\varphi}^{\circ}$ does \textit{not} simulate $\mathcal{G}_{\psi}$ and thus $(ii)\centernot\implies(i)$.

\textit{Step 1}. We claim that $\mathcal{G}_{\varphi}\leq_{\mathcal{V},\mathcal{F}}\mathcal{G}_{\psi}$ for any $\mathcal{V}$ that is closed under forward composition with the class of dynamics $\mathcal{F}$. To that end, let $F=\{f_i\,|\,i\in \Sigma\}$ be any two invertible linear maps parametrized by some matrices $A_1,A_2\in \mathbb{R}^{n\times n}$, for some $n\in \mathbb{N}_{> 0}$. Then, given any triple $\{V_a,V_b,V_c\}\in (\mathcal{V}_n\cap \mathrm{Lyap}_0(\mathbb{R}^n))^3$ of admissible Lyapunov functions for $\mathcal{G}_{\varphi}$, we claim that $x\mapsto {V}_{a'}(x):=V_a(x)$ and $x\mapsto {V}_{b'}(x):=V_a(A_2 x)$ are admissible for $\mathcal{G}_{\psi}$. Note that by assumption (\textit{i.e.}, closure of $\mathcal{V}$ under forward composition with the dynamics and $A_2\in \mathsf{GL}(n,\mathbb{R})$), $x\mapsto V_a(A_2 x)\in \mathcal{V}_n\cap \mathrm{Lyap}_0(\mathbb{R}^n)$. The inequalities to check for $\mathcal{G}_{\psi}$ are
\begin{align}
\label{ineq1} {V}_{a'}(x)&\geq {V}_{a'}(A_1 x),\quad \forall x\in \mathbb{R}^n,\\
\label{ineq2} {V}_{a'}(x)&\geq {V}_{b'}(A_1 x),\quad \forall x\in \mathbb{R}^n,\\
\label{ineq3} {V}_{a'}(x)&\geq {V}_{b'}(A_2 x),\quad \forall x\in \mathbb{R}^n,\\
\label{ineq4} {V}_{b'}(x)&\geq {V}_{a'}(A_2 x),\quad \forall x\in \mathbb{R}^n.
\end{align}
Inequality~\eqref{ineq1} is true since ${V}_{a'} = V_a$ and $V_a(x)\geq V_a(A_1 x)$ by $\mathcal{G}_{\varphi}$. For Inequality~\eqref{ineq2}, note that by $\mathcal{G}_{\varphi}$ we have $V_a(x)\geq V_b(A_1 x)$ and $V_b(x)\geq V_a (A_2 x)$, thus, $V_a(x)\geq V_a(A_2 A_1 x)$, which is equivalent to ${V}_{a'}(x)\geq {V}_{b'}(A_1 x)$. Similarly, for Inequality~\eqref{ineq3} note that by $\mathcal{G}_{\varphi}$ we have $V_a(x)\geq V_c(A_2 x)$ and $V_c(x)\geq V_a (A_2 x)$, thus, $V_a(x)\geq V_a(A_2 A_2 x)$, which is equivalent to ${V}_{a'}(x)\geq {V}_{b'}(A_2 x)$. At last, for Inequality~\eqref{ineq4}, note that ${V}_{b'}(x)=V_a (A_2 x)= {V}_{a'} (A_2 x)$. This concludes Step 1, next we show that $\mathcal{G}_{\varphi}^{\circ}$ does \textit{not} simulate $\mathcal{G}_{\psi}$.

\textit{Step 2}. First note that $\mathcal{G}_{\varphi}=:(S_{\varphi},E_{\varphi})$ does not simulate $\mathcal{G}_{\psi}=:(S_{\psi},E_{\psi})$. For the sake of contradiction, assume a simulation relation does hold, thus, there must be a map $R:S_{\psi}\to S_{\varphi}$ such that for all $(a',b',i)\in E_{\psi}$ we have $(R(a'),R(b'),i)\in E_{\varphi}$.
As $(a',a',1)\in E_{\psi}$ we must have $a'\mapsto R(a'):=a$. However, under this $R$, the node $b'$ cannot map to $a$, since $(a',b',2)\in E_{\psi}$, yet, $(a,a,2)\not\in E_{\varphi}$. The node $b'$ can also not map to $b$, since $(a,b,2)\not\in E_{\varphi}$. At last, $b'$ cannot map to $c$ since $(a',b',1)\in E_{\psi}$, yet, $(a,c,1)\notin E_{\varphi}$. In conclusion, this simulation relation cannot exist. 

Next we claim that $\mathcal{G}_{\varphi}^{\circ}=:(S_{\varphi}^{\circ},E_{\varphi}^{\circ})$ does not simulate $\mathcal{G}_{\psi}$ either. Again, for the sake of contradiction, suppose such a simulation relation does hold. Then, there must be a map $R:S_{\psi} \to S_{\varphi}^{\circ}$ such that for all $(a',b',i)\in E_{\psi}$ we have $(R(a'),R(b'),i)\in E^{\circ}_{\varphi}$. In particular, note that we have both $(a',b',1)$ and $(a',b',2)$ in $E_{\psi}$. However, $\mathcal{G}^{\circ}_{\varphi}\setminus \mathcal{G}_{\varphi}$ never contains a pair of nodes with multiple and differently labeled edges between them. The reason is as follows, given any $(a,b,i)\in E_{\varphi}$, then, this edge is lifted to $((a,j),(b,i),j)$ for any $j\in \Sigma$, \textit{i.e.}, the node $(a,j)$ will only have outgoing edges with the label $j$ \textit{cf}. Definition~\ref{def:composition} and Example~\ref{ex:12lifts}. As this argument only holds true for $\mathcal{G}^{\circ}_{\varphi}\setminus \mathcal{G}_{\varphi}$, the remaining possibility for $\mathcal{G}_{\varphi}^{\circ}$ to simulate $\mathcal{G}_{\psi}$ is that $\mathcal{G}_{\varphi}$ simulates $\mathcal{G}_{\psi}$, but we just showed that this is impossible.   
\exampleEnd
\end{upshape}
\end{example}

We remark that in the above we did not exploit the \textit{linear} structure of $\mathcal{F}$, this was for simplicity. Also, without Assumption~\ref{ass:pclf} it is only easier to find counterexamples. 

Although Conjecture~\ref{conj:forward:composition} is not true, making the composition lift structurally different from, \textit{e.g.}, the sum lift, we observe the following structural benefit of the composition lift. For the $T$-sum lift, we have $\mathcal{G}\leq_{(\mathcal{V})}\mathcal{G}^{\oplus T}$ and $\mathcal{G}^{\oplus T}\leq \mathcal{G}$, for any $T\in \mathbb{N}_{>0}$ and any $\mathcal{V}$ closed under addition. Hence, for any path-complete graph $\mathcal{G}$, any $\mathcal{V}$ closed under addition and any $T\in \mathbb{N}_{>0}$ we have that $\rho_{\mathcal{G},\mathcal{V}}(\mathcal{A})=  \rho_{\mathcal{G}^{\oplus T},\mathcal{V}}(\mathcal{A})$, \textit{i.e.}, a $T$-sum lift will \textit{not} refine the JSR approximation. Now, for the (forward) composition lift, interestingly, $\mathcal{G}^{\circ T}\leq \mathcal{G}$ \textit{cannot} be guaranteed for $T\in \mathbb{N}_{>0}$ (\textit{e.g.}, see Example~\ref{ex:num} below for the formal justification of this claim) while $\mathcal{G}\leq_{(\mathcal{V},\mathcal{F})}\mathcal{G}^{\circ T}$, for $\mathcal{V}$ closed under composition with $\mathcal{F}$, \textit{is} true for any $T\in \mathbb{N}_{>0}$. Thus, non-trivial $T$-forward composition lifts of $\mathcal{G}$ are possibly \textit{strictly} better than $\mathcal{G}$ itself. In combination with Lemma~\ref{lem:connected}, this provides for an attractive refinement procedure. We illustrate this with numerical experiments.

\begin{example}[Refining graphs through the composition lift]
\label{ex:num}
\begin{upshape}
Consider the graph $\mathcal{G}_{\alpha}$ from Figure~\ref{fig:connected}. One can show that $\mathcal{G}_{\alpha}^{\circ 1}$ does not simulate $\mathcal{G}_{\alpha}$. Recall that $\mathcal{G}_{\alpha}\leq_{(\mathcal{V},\mathcal{F})}\mathcal{G}_{\alpha}^{\circ 1}$, for any $\mathcal{V}$ closed under the forward composition with the dynamics in $\mathcal{F}$, subject to those dynamics being invertible. Suppose we work with linear maps. We sample $N$ pairs $\mathcal{A}:=\{A_1,A_2\}$ from $\mathsf{GL}(n,\mathbb{R})$ such that $\rho(A_1)<1$ and $\rho(A_2)<1$ (\textit{i.e.}, we sample $\mathrm{vec}(A_i)$ from $\mathcal{N}(0,I_{n^2})$ until we have a feasible pair). Using~\eqref{equ:gamma:opt} and SDPT3 \cite{ref:tutuncu2003solving}, we compute $\rho_{\mathcal{G}_{\alpha},\mathcal{V}}(\mathcal{A})$ and $\rho_{\mathcal{G}^{\circ 1}_{\alpha},\mathcal{V}}(\mathcal{A})$ for $\mathcal{V}_n:=\{x\mapsto x^{\mathsf{T}}Px\,|\, P\succeq I_n\}$.  Indeed, $\rho_{\mathcal{G}^{\circ 1}_{\alpha},\mathcal{V}}(\mathcal{A})\leq \rho_{\mathcal{G}_{\alpha},\mathcal{V}}(\mathcal{A})$, but in particular, for $N=1000$ and $n=3$, $\rho_{\mathcal{G}^{\circ 1}_{\alpha},\mathcal{V}}(\mathcal{A})< \rho_{\mathcal{G}_{\alpha},\mathcal{V}}(\mathcal{A})$ for $12\%$ of the experiments, with $\rho_{\mathcal{G}_{\alpha},\mathcal{V}}(\mathcal{A})-\rho_{\mathcal{G}^{\circ 1}_{\alpha},\mathcal{V}}(\mathcal{A})=0.44$ on average. Similarly, if we compare $\mathcal{G}_0$ against $\mathcal{G}_0^{\circ 1}$ and $\mathcal{G}_0^{\circ 2}$ (\textit{e.g.}, recall Figures 1-3), in the same experimental setting, we find that $\rho_{\mathcal{G}^{\circ 1}_{0},\mathcal{V}}(\mathcal{A})< \rho_{\mathcal{G}_{0},\mathcal{V}}(\mathcal{A})$ for $12\%$ of the cases, with $\rho_{\mathcal{G}_{0},\mathcal{V}}(\mathcal{A})-\rho_{\mathcal{G}^{\circ 1}_{0},\mathcal{V}}(\mathcal{A})=0.43$ on average and $\rho_{\mathcal{G}^{\circ 2}_{0},\mathcal{V}}(\mathcal{A})< \rho_{\mathcal{G}^{\circ 1}_{0},\mathcal{V}}(\mathcal{A})$ for $5\%$ of the cases, with $\rho_{\mathcal{G}^{\circ 1}_{0},\mathcal{V}}(\mathcal{A})-\rho_{\mathcal{G}^{\circ 2}_{0},\mathcal{V}}(\mathcal{A})=0.42$ on average. We emphasize that $\mathcal{G}_{\alpha}^{\oplus T}$ or $\mathcal{G}_0^{\oplus T}$ provide no improvement, nor recipe. 
\exampleEnd
\end{upshape}
\end{example} 

\subsection{The transitive closure of the composition lift}
\label{sec:transitive}
Both Lemma~\ref{lem:converse} and Example~\ref{ex:conjecture:counter} indicate that we should \textit{combine} \textit{several} $T$-composition lifts. Indeed, if we have two inequalities $V_a(x)\geq V_b(f_i(x))$ and $V_b(x)\geq V_c(f_j(x))$ and work with a template closed under composition with $f_i$ and $f_j$, then we get $V_a(x)\geq V_{(c,j)}(f_i(x))$ for free. By identifying any expression of the form $V_{(s,i_1,\dots,i_T)}(f_k(x))$ with $V_{(s,i_1,\dots,i_T,k)}(x)$ (\textit{i.e.}, allow for the moment for edges without labels), we can define $(\mathcal{G}^{\circ})^+$ as the \textit{transitive closure} of $\mathcal{G}^{\circ}$ with respect to the relation $\leq$ on $\mathbb{R}_{\geq 0}$ (not to be confused with a preorder for graphs), where we only keep labeled edges with words of length one, see Figure~\ref{fig:trans}. Importantly, equalities of the form $V_{(a,i)}(x)=V_a(f_i(x))$ will be understood to give rise to $V_{(a,i)}(x)\geq V_a(f_i(x))$. Doing so, we readily find that $(\mathcal{G}_{\varphi}^{\circ})^+$ \textit{does} simulate $\mathcal{G}_{\psi}$, see Figure~\ref{fig:conjecture:counter:trans}. A full formal study of this new lift is future work, but the following result is immediate and now applicable to Example~\ref{ex:conjecture:counter}.
\begin{theorem}[The transitive composition lift]
    Consider $\mathcal{G}, \widetilde{\mathcal{G}}\in \mathrm{pc}(\Sigma)$. Suppose that $\mathcal{F}$ comprises a set of continuous invertible maps, then
\begin{enumerate}[(i)]
    \item $(\mathcal{G}^{\circ})^+$ simulates $\widetilde{\mathcal{G}}$ $\implies$
    \item $\mathcal{G}\leq_{\mathcal{V},\mathcal{F}} \widetilde{\mathcal{G}}$ for any template $\mathcal{V}$ closed under forward composition with the class of dynamics $\mathcal{F}$.
\end{enumerate}
\end{theorem}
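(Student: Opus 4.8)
The plan is to transport admissible Lyapunov functions \emph{along} the nodes of $(\mathcal{G}^{\circ})^+$, in the same spirit as the proof that a simulation relation implies the preorder (Lemma~\ref{lemma:simulation:order}, \cite[Thm.~7.65]{ref:debauche2024path}), the only new ingredient being that nodes now carry iterated compositions. Fix a template $\mathcal{V}$ closed under forward composition with $\mathcal{F}$, fix $F=\{f_i\,|\,i\in\Sigma\}\subseteq\mathcal{F}_n$ for some $n$, and assume $(V_S,1)\in\mathrm{pclf}(\mathcal{G},\mathcal{V},F)$. The node set of $(\mathcal{G}^{\circ})^+$ is that of $\mathcal{G}^{\circ}$, so every node is of the form $p=(s,i_1,\dots,i_T)$ with $s\in S$, $(i_1,\dots,i_T)\in\Sigma^T$, $T\in\mathbb{N}_{\geq 0}$; to such a $p$ I would associate
\[
W_p \;:=\; V_s\circ f_{i_1}\circ\cdots\circ f_{i_T}.
\]
First I would verify $W_p\in\mathcal{V}_n\cap\mathrm{Lyap}_0(\mathbb{R}^n)$: membership in $\mathcal{V}_n$ is precisely closure of $\mathcal{V}$ under forward composition with $\mathcal{F}$, and membership in $\mathrm{Lyap}_0(\mathbb{R}^n)$ holds because each $f_i$ is a continuous invertible self-map of $\mathbb{R}^n$ fixing the origin, hence (by invariance of domain) a proper homeomorphism, so composition preserves positive definiteness and, by a routine properness argument, the existence of $\mathcal{K}_\infty$ lower and upper bounds; for linear invertible $\mathcal{F}$ this is immediate.

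The heart of the argument is the \emph{soundness} of the edges of $(\mathcal{G}^{\circ})^+$: I claim that for every labeled edge $(p,q,k)$ of $(\mathcal{G}^{\circ})^+$ one has $W_p(x)\geq W_q(f_k(x))$ for all $x\in\mathbb{R}^n$. I would prove this by induction following the construction of $(\mathcal{G}^{\circ})^+$. The base case concerns an edge of some $\mathcal{G}^{\circ T}$, say $\big((a,j_1,\dots,j_T),(b,i,j_1,\dots,j_{T-1}),j_T\big)$ induced by $(a,b,i)\in E$; writing $y:=f_{j_1}\circ\cdots\circ f_{j_T}(x)$, the required inequality $W_{(a,j_1,\dots,j_T)}(x)\geq W_{(b,i,j_1,\dots,j_{T-1})}(f_{j_T}(x))$ reduces exactly to $V_a(y)\geq V_b(f_i(y))$, which is an admissible inequality of $\mathcal{G}$. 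Moreover the identifications used in defining $(\mathcal{G}^{\circ})^+$, namely $W_{(s,\mathbf{i})}(f_k(x))=W_{(s,\mathbf{i},k)}(x)$, hold as \emph{equalities}, hence are sound in both directions. For the inductive step, every further edge of $(\mathcal{G}^{\circ})^+$ arises by concatenating two already-verified relations and absorbing (or removing) a label through one of these equalities; since ``$\geq$'' is transitive and substituting a map into both sides of a valid inequality preserves it, a concatenation of $W_p(x)\geq W_q(g(x))$ with $W_q(z)\geq W_r(h(z))$ yields $W_p(x)\geq W_r\big(h(g(x))\big)$, and $(\mathcal{G}^{\circ})^+$ retains exactly those consequences whose accumulated map is a single $f_k$, $k\in\Sigma$. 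Thus every retained labeled edge is sound.

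Granting soundness, the theorem follows. Let $R$ be a simulation relation witnessing $(i)$, i.e., a map from the nodes of $\widetilde{\mathcal{G}}$ to those of $(\mathcal{G}^{\circ})^+$ such that $(a,b,i)\in\widetilde{E}\implies\big(R(a),R(b),i\big)$ is an edge of $(\mathcal{G}^{\circ})^+$. Put $\widetilde{V}_a:=W_{R(a)}$ for every node $a$ of $\widetilde{\mathcal{G}}$. By the first paragraph $\widetilde{V}_a\in\mathcal{V}_n\cap\mathrm{Lyap}_0(\mathbb{R}^n)$, and by soundness, for every $(a,b,i)\in\widetilde{E}$,
\[
\widetilde{V}_a(x)=W_{R(a)}(x)\geq W_{R(b)}\big(f_i(x)\big)=\widetilde{V}_b\big(f_i(x)\big)\qquad\forall x\in\mathbb{R}^n,
\]
so $(V_{\widetilde{S}},1)\in\mathrm{pclf}(\widetilde{\mathcal{G}},\mathcal{V},F)$. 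Since $F\in\mathcal{F}^{|\Sigma|}$ and $\mathcal{V}$ (closed under forward composition with $\mathcal{F}$) were arbitrary, this is exactly $\mathcal{G}\leq_{\mathcal{V},\mathcal{F}}\widetilde{\mathcal{G}}$, i.e., $(ii)$.

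The main obstacle I anticipate is the bookkeeping in the soundness induction: one must pin down exactly what the edges of $(\mathcal{G}^{\circ})^+$ are — the interplay between the transitive closure of ``$\geq$'', the temporary unlabeled edges, the identifications $V_{(s,\mathbf{i})}(f_k(x))=V_{(s,\mathbf{i},k)}(x)$, and the final pruning to length-one labels — and check that no absorption or pruning step can break soundness. A second, milder, point is the coercivity-preservation claim for general continuous invertible maps, which needs invariance of domain together with a properness argument, although it is trivial in the linear case that motivates the paper. Note that only the direction $(i)\implies(ii)$ is asserted, so the delicate (and, for $\mathcal{G}^{\circ}$ alone, false) converse does not enter.
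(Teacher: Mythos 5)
Your proposal is correct and matches the paper's intended argument: the paper gives no explicit proof (calling the result ``immediate''), and the immediate argument is exactly yours --- assign $W_{(s,i_1,\dots,i_T)}=V_s\circ f_{i_1}\circ\cdots\circ f_{i_T}$, note these stay in the template by closure under forward composition and in $\mathrm{Lyap}_0(\mathbb{R}^n)$ by invertibility, observe every edge of $(\mathcal{G}^{\circ})^+$ encodes a valid inequality (substitution plus transitivity of $\geq$), and pull back along the simulation map. Your convention for the composition order is the one consistent with Definition~\ref{def:composition} and the identification $V_{(s,i_1,\dots,i_T)}(f_k(x))=V_{(s,i_1,\dots,i_T,k)}(x)$, so no gap remains beyond the routine coercivity argument you already flag.
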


\begin{figure}
    \centering
   \includegraphics[scale=0.6]{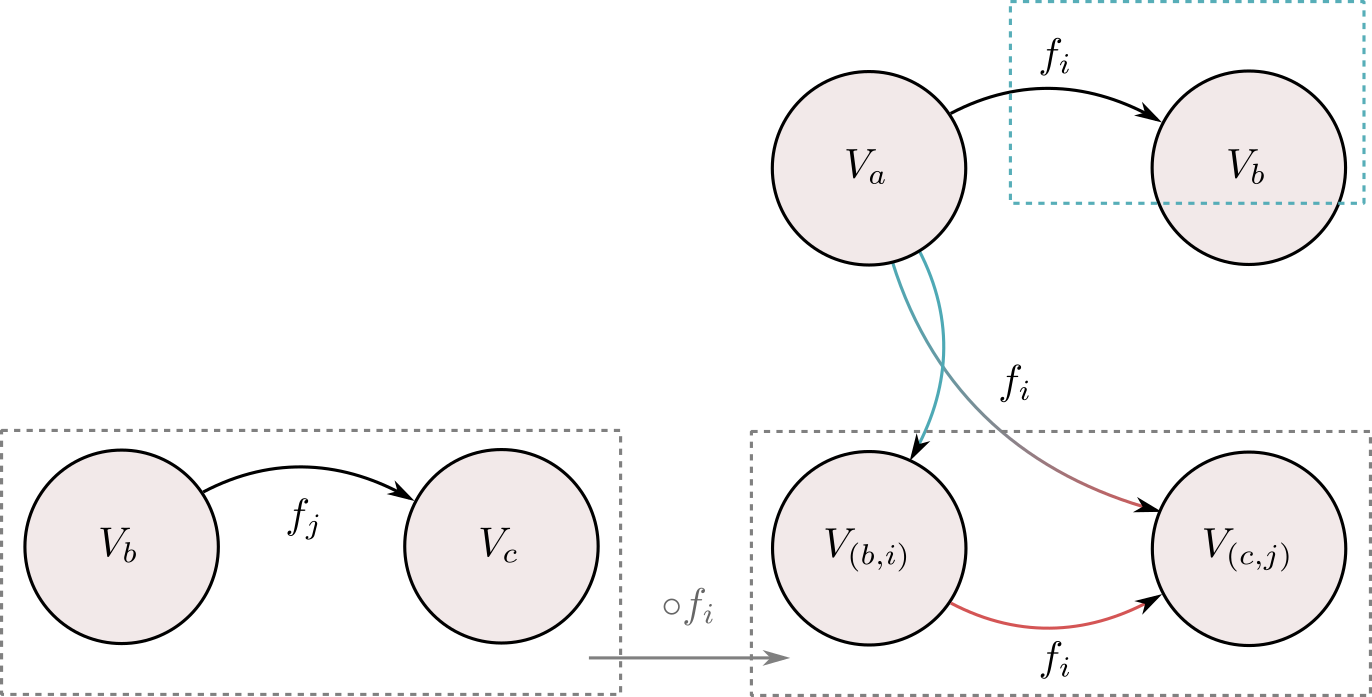}
    \caption{Example construction of the transitive closure of a composition lift: start from $V_a(x)\geq V_b(f_i(x))$ and $V_b(x)\geq V_c(f_j(x))$, lift the second inequality to $V_{(b,i)}(x)\geq V_{(c,j)}(f_i(x))$ and see that $V_a(x)\geq V_{(b,i)}(x)$. Combine the aforementioned to construct the edge $V_a(x)\geq V_{(c,j)}(f_i(x))$.}
    \label{fig:trans}
\end{figure}

\begin{figure}
    \centering
   \includegraphics[scale=0.6]{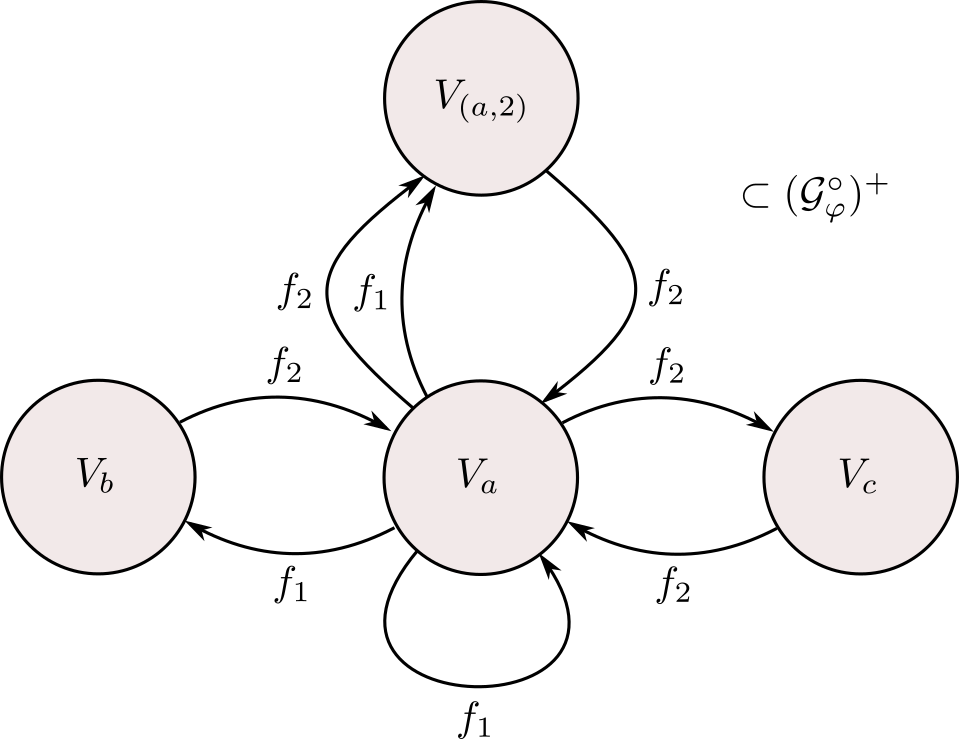}
    \caption{Reconsider the graphs $\mathcal{G}_{\varphi}$ and $\mathcal{G}_{\psi}$ from Example~\ref{ex:conjecture:counter}. Although $\mathcal{G}^{\circ}_{\varphi}$ does not simulate $\mathcal{G}_{\psi}$, $(\mathcal{G}^{\circ}_{\varphi})^+$ does.}
    \label{fig:conjecture:counter:trans}
\end{figure}

\section{Conclusion and future work}
We have motivated the need for new tools to compare Lyapunov inequalities (see Example~\ref{ex:counter:T-sum}). To that end, we have studied the composition lift, showed some desirable properties (see Section~\ref{sec:comp:lift:properties} and Example~\ref{ex:num}), but also that a key conjecture is false (see Example~\ref{ex:conjecture:counter}). Then, to address that $\mathcal{G}_{\varphi}$ and $\mathcal{G}_{\psi}$ \textit{can} be preordered, just not via a composition lift, we proposed to use the transitive closure of the composition lift (see Section~\ref{sec:transitive}), plus we recently started to study abstract lifts. We can show that given any preorder, a lift with properties akin to Theorem~\ref{theorem:T:sum:lift} always \textit{exists}~\cite{ref:JongeneelJungersBNLX25}. Other avenues of interest are statistical orderings~\cite{ref:jungers2024statistical} and certain \textit{equivalence classes} of PCLFs.

\printbibliography
\end{document}